\newtheorem{theorem}{Theorem}[section]
\newtheorem{corollary}{Corollary}[section]
\newtheorem{proposition}{Proposition}[section]
\newtheorem{definition}{Definition}[section]
\newtheorem{remark}{Remark}[section]
\numberwithin{equation}{section}
\newtheorem{example}{Example}[section]
\newenvironment{proof}[1][Proof]{\noindent\textbf{#1.} }{\hfill {$\Box$}}
\numberwithin{equation}{section}
\begin{document}

\title{\textsc{Discrete Asymptotic Behaviors for Skew-Evolution Semiflows on Banach Spaces }}
\author{\textsc{Codru\c{t}a Stoica \ \ \ Mihail Megan}}
\date{}
\maketitle

{\footnotesize \noindent \textbf{Abstract.} The paper emphasizes
asymptotic behaviors, as stability, instability, dichotomy and
trichotomy for skew-evolution semiflows, defined by means of
evolution semiflows and evolution cocycles and which can be
considered generalizations for evolution operators and
skew-product semiflows. The definition are given in continuous
time, but the unified treatment for the characterization of the
studied properties in the nonuniform case is given in discrete
time.

The property of trichotomy, introduced in finite dimension in
\cite{ElHa_JMAA} as a natural generalization for the dichotomy of
linear time-varying differential systems, was studied in
continuous time and from uniform point of view in
\cite{MeSt_BSUPT} and in discrete time and from nonuniform point
of view in \cite{MeSt_TP07}, but for a particular case of
one-parameter semiflows.}

{\footnotesize \vspace{3mm} }

{\footnotesize \noindent \textit{Mathematics Subject
Classification:} 93D20, 34D05, 34D09}

{\footnotesize \vspace{2mm} }

{\footnotesize \noindent \textit{Keywords:} Evolution semiflow,
evolution cocycle, skew-evolution semiflow, exponential stability,
exponential instability, exponential dichotomy, exponential
trichotomy}

\section{Notations. Definitions}

Let us consider $(X,d)$ a metric space, $V$ a Banach space and
$\mathcal{B}(V)$ the space of all bounded linear operators from
$V$ into itself. We will denote $Y=X\times V$, $T
=\left\{(t,t_{0})\in \mathbf{R}^{2}, \ t\geq t_{0}\geq 0\right\}$
and $\Delta=\left\{ (m,n)\in \mathbb{N}^{2}, \ m\geq n\right\}$.
Let $\mathcal{R}=\{R:\mathbb{R}_{+}\rightarrow\mathbb{R}_{+}| \ R
\ \textrm{nondecreasing}, \ R(0)=0, \ R(t)>0, \ \forall t>0\}$. By
$I$ is denoted the identity operator on $V$. Let $P:Y\rightarrow
Y$ be a projector given by $P(x,v)=(x,P(x)v)$, where $P(x)$ is a
projection on $Y_{x}=\{x\}\times V$ and $x\in X$.

\begin{definition}\rm\label{def_sfl_ev}
A mapping $\varphi: T\times  X\rightarrow  X$ is called
\emph{evolution semiflow} on $ X$ if following relations hold:

$(s_{1})$ $\varphi(t,t,x)=x, \ \forall (t,x)\in
\mathbf{R}_{+}\times X$

$(s_{2})$ $\varphi(t,s,\varphi(s,t_{0},x))=\varphi(t,t_{0},x),
\forall t\geq s\geq t_{0}\geq 0, \ x\in X$.
\end{definition}

\begin{definition}\rm\label{def_aplcoc_ev}
A mapping $\Phi: T\times  X\rightarrow \mathcal{B}(V)$ is called
\emph{evolution cocycle} over an evolution semiflow $\varphi$ if:

$(c_{1})$ $\Phi(t,t,x)=I$, the identity operator on $V$, $\forall
(t,x)\in \mathbf{R}_{+}\times X$

$(c_{2})$
$\Phi(t,s,\varphi(s,t_{0},x))\Phi(s,t_{0},x)=\Phi(t,t_{0},x),\forall
t\geq s\geq t_{0}\geq 0, \ x\in X$.
\end{definition}

\begin{definition}\rm\label{def_coc_ev_1}
The mapping $C: T\times Y\rightarrow Y$ defined by the relation
\begin{equation}
C(t,s,x,v)=(\varphi(t,s,x),\Phi(t,s,x)v),
\end{equation}
where $\Phi$ is an evolution cocycle over an evolution semiflow
$\varphi$, is called \emph{skew-evolution semiflow} on $Y$.
\end{definition}

\begin{example}\label{ex_ce}\rm
Let $V=\mathbb{R}^{3}$ endowed with the norm
\begin{equation*}
\left\Vert(v_{1},v_{2},v_{3})\right\Vert=|v_{1}|+|v_{2}|+|v_{3}|.
\end{equation*}
We denote $\mathcal{C}=\mathcal{C}(\mathbb{R}_{+},\mathbb{R}_{+})$
the set of all continuous functions $x:\mathbb{R}_{+}\rightarrow
\mathbb{R}_{+}$, endowed with the topology of uniform convergence
on compact subsets of $\mathbb{R}_{+}$, metrizable relative to the
metric
\begin{equation*}
d(x,y)=\sum_{n=1}^{\infty}\frac{1}{2^{n}}\frac{d_{n}(x,y)}{1+d_{n}(x,y)},
\ \textrm{unde} \ d_{n}(x,y)=\underset{t\in
[0,n]}\sup{|x(t)-y(t)|}.
\end{equation*}
If $x\in \mathcal{C}$ then for all $t\in \mathbb{R}_{+}$ we denote
$x_{t}(s)=x(t+s)$, $x_{t}\in \mathcal{C}$. Let $ X$ the closure in
$\mathcal{C}$ of the set $\{f_{t},t\in \mathbb{R}_{+}\}$, where
$f:\mathbb{R}_{+}\rightarrow \mathbb{R}_{+}^{*}$ is a
nondecreasing function with the property $\underset{t\rightarrow
\infty}\lim{f(t)}=l>0$. Then $( X,d)$ is a metric space and the
mapping
\begin{equation*}
\varphi: T\times  X\rightarrow  X, \
\varphi(t,s,x)(\tau)=x(t-s+\tau)
\end{equation*}
is an evolution semiflow on $ X$. The mapping $\Phi: T\times
X\rightarrow \mathcal{B}(V)$, given by
\begin{equation*}
\Phi(t,s,x)v=\left(e^{-2(t-s)x(0)+\int_{0}^{t}x(\tau)d\tau}v_{1},
e^{t-s+\int_{0}^{t}x(\tau)d\tau}v_{2},e^{-(t-s)x(0)+2\int_{0}^{t}x(\tau)d\tau}v_{3}\right)
\end{equation*}
is an evolution cocycle. Then $C=(\varphi,\Phi)$ is a
skew-evolution semiflow.
\end{example}

\begin{remark}\rm
If $C=(\varphi, \Phi)$ is a skew-evolution semiflow, then
$C_{\lambda}=(\varphi, \Phi_{\lambda})$, where $\lambda \in
\mathbb{R}$ and
\begin{equation}\label{relcevshift}
\Phi_{\lambda}: T\times  X\rightarrow \mathcal{B}(V), \
\Phi_{\lambda}(t,t_{0},x)=e^{-\lambda(t-t_{0})}\Phi(t,t_{0},x),
\end{equation}
is also a skew-evolution semiflow.
\end{remark}

\section{Nonuniform discrete exponential stability}

\begin{definition}\rm\label{def_ns_nes}
A skew-evolution semiflow $C$ is said to be

$(s)$ \emph{stable} if there exists a mapping
$N:\mathbb{R}_{+}\rightarrow\mathbb{R}_{+}^{*}$ such that
\begin{equation}\label{rel_exp_stab}
\left\Vert \Phi(t,t_{0},x)v\right\Vert \leq N(s)\left\Vert
\Phi(s,t_{0},x)v\right\Vert,
\end{equation}%
for all $(t,s),(s,t_{0})\in  T$ and all $(x,v)\in Y$;

$(es)$ \emph{exponentially stable} if there exist a constant $\nu
>0$ and a mapping
$N:\mathbb{R}_{+}\rightarrow\mathbb{R}_{+}^{*}$ such that
\begin{equation}\label{rel_exp_stab}
\left\Vert \Phi(t,t_{0},x)v\right\Vert \leq N(s)e^{-\nu
(t-s)}\left\Vert \Phi(s,t_{0},x)v\right\Vert,
\end{equation}%
for all $(t,s),(s,t_{0})\in  T$ and all $(x,v)\in Y$.
\end{definition}

\begin{example}\rm\label{ex_nues1}
Let $ X=\mathbb{R}_{+}$ and $V=\mathbb{R}$. We consider the
continuous function
\[
f:\mathbb{R}_{+}\rightarrow[1,\infty), \ f(n)=e^{2n} \
\textrm{and} \ f\left(n+\frac{1}{e^{n^{2}}}\right)=1
\]
and the mapping
\[
\Phi_{f}:
T\times\mathbb{R}_{+}\rightarrow\mathcal{B}(\mathcal{\mathbb{R}}),
\ \Phi_{f}(t,s,x)v=\frac{f(s)}{f(t)}e^{-(t-s)}v.
\]
Then $C_{f}=(\varphi,\Phi_{f})$ is a skew-evolution semiflow
$Y=\mathbb{R}_{+}\times \mathbb{R}$ over any evolution semiflow
$\varphi$ pe $\mathbb{R}_{+}$. As
\[
\left | \Phi_{f}(t,s,x)v\right |\leq f(s)e^{-(t-s)}|v|, \ \forall
(t,s,x,v)\in T\times Y,
\]
it follows that $C_{f}$ is exponentially stable.
\end{example}

\begin{definition}\rm\label{def_neg}
A skew-evolution semiflow $C$ has \emph{exponential growth} if
there exist the mappings $M$,
$\omega:\mathbb{R}_{+}\rightarrow\mathbb{R}_{+}^{*}$ such that
\begin{equation}\label{rel_neg}
\left\Vert \Phi(t,t_{0},x)v\right\Vert \leq M(s)e^{\omega(s)
(t-s)}\left\Vert \Phi(s,t_{0},x)v\right\Vert,
\end{equation}%
for all $(t,s),(s,t_{0})\in  T$ and all $(x,v)\in Y$.
\end{definition}

\vspace{3mm}

In discrete time, the exponential stability can be described as
follows.

\begin{proposition}\label{caract_es_discret}
A skew-evolution semiflow $C$ with exponential growth is
exponentially stable if and only if there exist a constant $\mu>
0$ and a sequence of real numbers $(a_{n})_{n\geq 0}$ with the
property $a_{n}\geq 1, \ \forall n\geq 0$ such that
\begin{equation}
\left\Vert\Phi(n,m,x)v\right\Vert\leq a_{m}e^{-\mu(n-m)}\left\Vert
v\right\Vert,
\end{equation}
for all $(n,m)\in \Delta$ and all $(x,v)\in Y$.
\end{proposition}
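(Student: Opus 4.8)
\medskip
\noindent\textbf{Proof outline.}
The plan is to prove the two implications separately; exponential growth is needed only for ``$\Leftarrow$''. For ``$\Rightarrow$'', let $C$ be exponentially stable, with constant $\nu>0$ and mapping $N$ (Definition~\ref{def_ns_nes}). Given $(n,m)\in\Delta$, apply the stability inequality with $t=n$ and $s=t_{0}=m$; since $\Phi(m,m,x)=I$ by $(c_{1})$, it becomes $\|\Phi(n,m,x)v\|\le N(m)e^{-\nu(n-m)}\|v\|$, and then $\mu:=\nu$ together with $a_{m}:=\max\{1,N(m)\}$ finishes this direction, since $a_{m}\ge1$ and the asserted estimate holds.

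For ``$\Leftarrow$'', assume the discrete estimate with $\mu>0$ and $(a_{n})_{n\ge0}$, and let $M,\omega:\mathbb{R}_{+}\to\mathbb{R}_{+}^{*}$ be as in Definition~\ref{def_neg} (taking $t=t_{0}=s$ in \eqref{rel_neg} shows $M(s)\ge1$). Fix $(t,s),(s,t_{0})\in T$ and $(x,v)\in Y$. Since $(c_{2})$ gives $\Phi(t,t_{0},x)v=\Phi(t,s,\varphi(s,t_{0},x))\Phi(s,t_{0},x)v$, it suffices to find $\nu>0$ and a mapping $N$ with $\|\Phi(t,s,y)w\|\le N(s)e^{-\nu(t-s)}\|w\|$ for all $(t,s)\in T$ and $(y,w)\in Y$. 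If $t-s<1$, the growth inequality with $t_{0}=s$ already gives $\|\Phi(t,s,y)w\|\le M(s)e^{\omega(s)}\|w\|$, which has the required form once $N(s)\ge M(s)e^{\omega(s)+\nu}$. If $t-s\ge1$, put $m=\lceil s\rceil$ and $n=\lfloor t\rfloor$, so $m,n\in\mathbb{N}$, $0\le m-s<1$, $0\le t-n<1$ and $m\le n$. Using $(c_{2})$ twice, with $(s_{2})$ for the base points,
\[
\Phi(t,s,y)=\Phi(t,n,\varphi(n,s,y))\,\Phi(n,m,\varphi(m,s,y))\,\Phi(m,s,y).
\]
Bounding the two outer factors by exponential growth over intervals shorter than $1$, namely $\|\Phi(m,s,y)w\|\le M(s)e^{\omega(s)}\|w\|$ and $\|\Phi(t,n,z)u\|\le M(n)e^{\omega(n)}\|u\|$, and the middle factor by the discrete hypothesis, $\|\Phi(n,m,z')u'\|\le a_{m}e^{-\mu(n-m)}\|u'\|$, and finally using $n-m>(t-s)-2$, one is led to
\[
\|\Phi(t,s,y)w\|\le a_{m}\,M(s)\,M(n)\,e^{\omega(s)+\omega(n)+2\mu}\,e^{-\mu(t-s)}\,\|w\|,
\]
so with $\nu=\mu$ this is the desired inequality, provided the prefactor can be absorbed into a value of $N$ depending only on $s$.

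The main difficulty I foresee is precisely that absorption: the prefactor carries $M(n)$ and $\omega(n)$ evaluated at $n=\lfloor t\rfloor$, i.e.\ near $t$ and not near $s$, so it is not literally a function of $s$; organizing the bound into the clean form $N(s)e^{-\nu(t-s)}$ therefore needs a little care, the natural device being to hold part of the decay $e^{-\mu(n-m)}$ in reserve --- take $\nu<\mu$ and use $n-m\ge(t-s)-2$ --- in order to offset the growth accumulated over the terminal fractional interval $[n,t]$. Everything else is routine: the reduction to $t_{0}=s$, the two uses of $(c_{2})$, the integer-part truncations, and the boundary case $m=n$ (which forces $t-s<2$, so only quantities bounded in terms of $s$ then appear).
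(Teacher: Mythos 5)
Your necessity argument is exactly the paper's: take $t=n$, $s=t_{0}=m$ in the stability inequality, then $\mu=\nu$ and $a_{m}=\max\{1,N(m)\}$ (the paper simply sets $a_{m}=N(m)$, so your $\max$ is a small improvement, since $N$ is only known to be positive). Your sufficiency also follows the same route the paper takes: truncate at integer times, estimate the fractional end pieces by the exponential growth property and the integer middle piece by the discrete hypothesis; your preliminary reduction via $(c_{2})$ to bounding $\left\Vert\Phi(t,s,y)w\right\Vert$ for an arbitrary intermediate time $s$ is in fact more careful than the paper, which only estimates $\left\Vert\Phi(t,t_{0},x)v\right\Vert$ against $\left\Vert v\right\Vert$.

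However, the difficulty you flag at the end is not a detail that ``a little care'' can absorb: under the stated hypotheses it is a genuine gap, and the device you propose (take $\nu<\mu$ and use $n-m\geq(t-s)-2$ to hold decay in reserve) cannot close it. Definition \ref{def_neg} places no restriction at all on the size of $M(\cdot)$ and $\omega(\cdot)$, so the factor $M(n)e^{\omega(n)}$ with $n=\lfloor t\rfloor$ need not be dominated by $C(s)e^{(\mu-\nu)(n-m)}$ for any choice of $\nu>0$; it may grow faster than every exponential in $n$. Indeed, the sufficiency implication is false as stated: over the constant evolution semiflow $\varphi(t,s,x)=x$, take $V=\mathbb{R}$ and $\Phi(t,s,x)v=\frac{g(t)}{g(s)}\,v$, where $g$ is continuous, $g(n)=e^{-n}$, $g\left(n+\frac{1}{2}\right)=1$, and $g$ is monotone on each half-interval. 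The discrete estimate holds with $a_{m}=1$, $\mu=1$; exponential growth holds with $M(s)=e^{[s]+1}$, $\omega(s)=1$, because $g(t)\leq 1\leq e^{[s]+1}g(s)$; yet exponential stability fails, since $s=t_{0}=0$ would force $1=g\left(n+\frac{1}{2}\right)\leq N(0)e^{-\nu\left(n+\frac{1}{2}\right)}$ for all $n$. So to finish you must strengthen the hypothesis: if $M$ and $\omega$ are constants (uniform exponential growth), your computation already yields $N(s)=a_{\lceil s\rceil}M^{2}e^{2(\omega+\mu)}$ and no reserve decay is needed; alternatively, an assumption like $\sup_{n}M(n)e^{\omega(n)}e^{-cn}<\infty$ for some $c<\mu$ makes your $\nu<\mu$ trick work. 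You should also know that the paper's own proof commits precisely the error you identified and never addresses it: its final bound carries the prefactor $a_{n}M^{2}(n)e^{2[\omega(n)+\mu]}$ with $n=[t]$, which is not a function of $s$ (or of $t_{0}$), and it nonetheless concludes exponential stability. So your proposal reproduces the paper's argument, and your honesty about the weak point is exactly where both arguments break down.
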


\begin{proof}
\emph{Necessity}. It is obtained immediately if we consider in
relation $(\ref{rel_exp_stab})$ $t=n$, $s=t_{0}=m$ and if we
define
\[
a_{m}=N(m), \ m\in\mathbb{N} \ \textrm{and} \ \mu=\nu>0,
\]
where the existence of
$N:\mathbb{R}_{+}\rightarrow\mathbb{R}_{+}^{*}$ and of $\nu$ is
given be Definition \ref{def_ns_nes}.

\emph{Sufficiency}. As a first step, if $t\geq t_{0}+1$, we denote
$n=[t]$ and $n_{0}=[t_{0}]$. Following relations hold
\[
n\leq t<n+1, \ n_{0}\leq t_{0}<n_{0}+1, \ n_{0}+1\leq n.
\]
We obtain
\[
\left\Vert\Phi(t,t_{0},x)v\right\Vert\leq
\]
\[
\leq
M(n)e^{\omega(n)(t-n)}\left\Vert\Phi(n,n_{0}+1,\varphi(n_{0}+1,t_{0},x))\Phi(n_{0}+1,t_{0},x)v\right\Vert\leq
\]
\[
\leq a_{n}M^{2}(n)e^{2[\omega(n)+\mu]}e^{-\mu(t-t_{0})}\left\Vert
v\right\Vert,
\]
for all $(x,v)\in Y$, where functions $M$ and $\omega$ are given
by Definition \ref{def_neg}.

As a second step, for $t\in[t_{0},t_{0}+1)$ we have
\[
\left\Vert\Phi(t,t_{0},x)v\right\Vert\leq
M(t_{0})e^{\omega(t_{0})(t-t_{0})}\left\Vert v\right\Vert\leq
M(t_{0})e^{\omega(t_{0})+\mu} e^{-\mu(t-t_{0})}\left\Vert
v\right\Vert,
\]
for all $(x,v)\in Y$.

Hence, $C$ is exponentially stable.
\end{proof}

\vspace{3mm}

Some characterizations for the exponential stability of
skew-evolution semiflows indiscrete time are given by the next
results.

\begin{theorem}\label{th_D_discret_neunif}
A skew-evolution semiflow $C$ with exponential growth is
exponentially stable if and only if there exist a mapping
$R\in\mathcal{R}$, a constant $\rho>0$ and a sequence of real
numbers $(\alpha_{n})_{n\geq 0}$ with the property $\alpha_{n}\geq
1, \ \forall n\geq 0$ such that
\begin{equation}
\sum_{k=n}^{m}R\left(e^{\rho(k-n)}\left\Vert
\Phi(k,n,x)v\right\Vert\right)\leq\alpha_{n} R\left(\left\Vert
v\right\Vert\right),
\end{equation}
for all $(m,n)\in \Delta$ and all $(x,v)\in Y$.
\end{theorem}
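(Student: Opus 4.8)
The plan is to prove the two implications separately, using Proposition \ref{caract_es_discret} as a bridge to the continuous‑time notion of exponential stability, and keeping the rate function $R$ as general as the class $\mathcal{R}$ allows only where it is genuinely needed.

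\emph{Necessity.} Since we only have to exhibit \emph{one} admissible triple $(R,\rho,(\alpha_n))$, I would take $R$ to be the identity, $R(t)=t\in\mathcal{R}$. If $C$ is exponentially stable, Proposition \ref{caract_es_discret} gives $\mu>0$ and a sequence $(a_n)$ with $a_n\ge1$ such that $\|\Phi(k,n,x)v\|\le a_n e^{-\mu(k-n)}\|v\|$ for all $(k,n)\in\Delta$ and $(x,v)\in Y$. Choosing $\rho\in(0,\mu)$, for instance $\rho=\mu/2$, and summing the resulting geometric bound,
\[
\sum_{k=n}^{m}e^{\rho(k-n)}\|\Phi(k,n,x)v\|\le a_n\|v\|\sum_{k=n}^{m}e^{-(\mu-\rho)(k-n)}\le\frac{a_n}{1-e^{-\mu/2}}\,\|v\|,
\]
yields the assertion with $\alpha_n=a_n\,(1-e^{-\mu/2})^{-1}\ge1$ and $R(t)=t$.

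\emph{Sufficiency.} The aim is to extract from the summability hypothesis a pointwise discrete exponential decay of exactly the form required by Proposition \ref{caract_es_discret}, and then invoke that proposition. Fix $(m,n)\in\Delta$ and $(x,v)\in Y$ with $v\ne0$ (if $\Phi(k,n,x)v=0$ for some $k$, the cocycle law $(c_2)$ makes the conclusion trivial). For bounded $m-n$ the exponential growth property of Definition \ref{def_neg} already furnishes the estimate after enlarging the constant, exactly as in the second step of the proof of Proposition \ref{caract_es_discret}. For large $m-n$ I would: (i) combine the cocycle identity $(c_2)$ with the exponential growth property on a block of indices $k$ in a window below $m$, getting $\|\Phi(m,n,x)v\|\le M(k)e^{\omega(k)(m-k)}\|\Phi(k,n,x)v\|$ and hence the lower bound
\[
e^{\rho(k-n)}\|\Phi(k,n,x)v\|\ge\frac{e^{\rho(k-n)}}{M(k)e^{\omega(k)(m-k)}}\,\|\Phi(m,n,x)v\|
\]
for each such $k$; (ii) apply the nondecreasing map $R$ and sum over that block; (iii) bound the resulting sum by $\alpha_n R(\|v\|)$ through the hypothesis. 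Taking a block of strictly more than $\alpha_n$ indices then forces the smallest of these $R$–values to be strictly below $R(\|v\|)$; since $R$ is nondecreasing with $R(t)>0$ for $t>0$, so that $R(a)<R(b)$ implies $a<b$, the corresponding argument is $<\|v\|$, and after collecting the exponential factors this reads $\|\Phi(m,n,x)v\|\le b_n e^{-\mu(m-n)}\|v\|$ for suitable $\mu\in(0,\rho)$ and $b_n\ge1$. Proposition \ref{caract_es_discret} then gives that $C$ is exponentially stable.

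The delicate point, and the place where the generality of $R$ bites, is step (iii): because $R$ need not be injective it cannot be inverted, so the only available leverage is the implication $R(a)<R(b)\Rightarrow a<b$. This is what forces the two devices above — averaging over a block of many comparable terms so that the averaged estimate drops strictly below $R(\|v\|)$, and playing the weight $e^{\rho(k-n)}$ against the exponential‑growth factor $e^{\omega(k)(m-k)}$ so that what survives is true exponential decay rather than mere boundedness. One must also organise the window and the block length so that the growth quantities $M,\omega$ entering the constant $b_n$ are kept under control uniformly in $m$; that is the only genuinely technical bookkeeping in the argument.
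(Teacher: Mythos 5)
Your necessity argument is correct and is essentially the paper's own: take $R(t)=t$, apply Proposition \ref{caract_es_discret}, choose $\rho=\mu/2$ and sum the geometric series; your constant $\alpha_n=a_n(1-e^{-\mu/2})^{-1}$ is fine. The pigeonhole mechanism in your sufficiency is also sound as far as it goes: with $v\neq 0$ and a block of more than $\alpha_n$ indices, the smallest term of the sum has $R$-value strictly below $R(\Vert v\Vert)$, and since $R$ is nondecreasing this does give an index $k^{*}$ with $e^{\rho(k^{*}-n)}\Vert\Phi(k^{*},n,x)v\Vert<\Vert v\Vert$.

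The genuine gap is exactly the point you set aside as ``technical bookkeeping,'' and in the nonuniform setting of the theorem it is not bookkeeping. To pass from $k^{*}$ to $m$ you must invoke Definition \ref{def_neg} with intermediate time $s=k^{*}$, which costs a factor $M(k^{*})e^{\omega(k^{*})(m-k^{*})}$; your window puts $k^{*}$ within roughly $\alpha_n$ of $m$, so $k^{*}\to\infty$ as $m\to\infty$ for fixed $n$. Since $M,\omega:\mathbb{R}_{+}\rightarrow\mathbb{R}_{+}^{*}$ are arbitrary (no monotonicity, no boundedness), this factor can outgrow $e^{\rho(m-n)}$, and the estimate you actually obtain is $\Vert\Phi(m,n,x)v\Vert\leq c(m,n)e^{-\mu(m-n)}\Vert v\Vert$ with a prefactor depending on $m$ through $k^{*}$ — not the bound $b_{n}e^{-\mu(m-n)}\Vert v\Vert$ with $b_{n}$ attached to the initial time that Proposition \ref{caract_es_discret} requires. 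No choice of window repairs this within your scheme: a block near $n$ forces propagation over a length comparable to $m-n$ and destroys the decay, while any block strictly below $m$ reintroduces the constants $M(k^{*}),\omega(k^{*})$. Your argument is the standard one in the uniform case (constant $M$, $\omega$), but the theorem is explicitly nonuniform. Note also that the paper's sufficiency takes a different route which avoids trying to beat the rate: it passes to the shifted semiflow $C_{-\rho}$ of (\ref{relcevshift}) and only seeks stability (boundedness, with constant indexed by $n$) of $\Phi_{-\rho}$, the exponential decay being already carried by the weight $e^{\rho(m-n)}$; any repair of your proof would likewise have to arrange that the growth property is only ever invoked with constants tied to times controlled by $n$.
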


\begin{proof} \emph{Necessity}. We consider $R(t)=t, \ t\geq 0$.
Proposition \ref{caract_es_discret} assures the existence of a
constant $\nu> 0$ and of a sequence of real numbers
$(a_{n})_{n\geq 0}$ with the property $a_{n}\geq 1, \ \forall
n\geq 0$. We obtain for $\rho=\frac{\nu}{2}>0$ and according to
Proposition \ref{caract_es_discret}
\[
\sum_{k=n}^{m}e^{\rho(k-n)}\left\Vert\Phi(k,n,x)v\right\Vert \leq
a_{n} \sum_{k=n}^{m}e^{\rho(k-n)}e^{-\nu(k-n)}\left\Vert
\Phi(n,n,x)v\right\Vert =
\]
\[
=a_{n}\left\Vert v\right\Vert
\sum_{k=n}^{m}e^{-\frac{\nu}{2}(k-n)}\leq \alpha_{n}\left\Vert
v\right\Vert, \forall m,n \in \Delta, \forall (x,v)\in Y,
\]
where we have denoted
\[
\alpha_{n}=a_{n}e^{\frac{\nu}{2}}, \ n\in \mathbb{N}.
\]
\emph{Sufficiency}. Let $t\geq t_{0}+1$, $t_{0}\geq 0$. We define
$n=[t]$ and $n_{0}=[t_{0}]$. We consider
$C_{-\rho}=(\varphi,\Phi_{-\rho})$ given as in relation
(\ref{relcevshift}). It follows that
\[
R\left(\left\Vert\Phi_{-\rho}(t,t_{0},x)v\right\Vert\right)=R\left
(\left\Vert\Phi_{-\rho}(t,n,\varphi(n,t_{0},x))\Phi_{-\rho}(n,t_{0},x)v\right\Vert\right)\leq
\]
\[
\leq
R\left(M(n)e^{\omega(t-n)}\left\Vert\Phi_{-\rho}(n,t_{0},x)v\right\Vert\right)
\leq
R\left(M(n)e^{\omega}\left\Vert\Phi_{-\rho}(n,t_{0},x)v\right\Vert\right),
\]
for all $(x,v)\in Y$, where $M$ and $\omega$ are given by
Definition \ref{def_neg}. We obtain further for $m\geq n$
\[
\int_{t_{0}+1}^{t}R\left(\left\Vert\Phi_{-\rho}(\tau,t_{0},x)v\right\Vert\right)d\tau
\leq
\sum_{k=[t_{0}]+1}^{m}R\left(M(n)e^{\omega}\left\Vert\Phi_{-\rho}(k,t_{0},x)v\right\Vert\right)
\leq
\]
\[
\leq \beta_{n}R\left(\left\Vert v\right\Vert\right),
\]
for all $(x,v)\in Y$, where $\beta_{n}=M(n)\alpha_{n}e^{\omega}$.
Then there exist $N\geq 1$ and $\nu>0$ such that
\[
\left\Vert\Phi_{-\rho}(t,t_{0},x)v\right\Vert\leq
Ne^{-\nu(t-t_{0})}\left\Vert v\right\Vert, \ \forall t\geq
t_{0}+1, \ \forall (x,v)\in Y.
\]
On the other hand, for $t\in[t_{0},t_{0}+1)$ we have
\[
R\left(\left\Vert\Phi_{-\rho}(t,t_{0},x)v\right\Vert\right)\leq
R\left(Me^{\omega(t-t_{0})}\left\Vert v\right\Vert\right)\leq
R\left(Me^{\omega}\left\Vert v\right\Vert\right), \ \forall
(x,v)\in Y.
\]
We obtain that $C_{-\rho}$ is stable, where
\[
\Phi_{-\rho}(m,n,x)=e^{\rho(m-n)}\left\Vert\Phi(m,n,x)\right\Vert,
\ (m,n,x)\in\Delta\times X.
\]
Hence, there exists a sequence $(a_{n})_{n\geq 0}$ with the
property $a_{n}\geq 1, \ \forall n\geq 0$, such that
\[
e^{\rho(m-n)}\left\Vert\Phi(m,n,x)v\right\Vert\leq a_{n}\left\Vert
v\right\Vert, \ \forall (m,n,x,v)\in\Delta\times Y,
\]
which implies the exponential stability of $C$ and ends the proof.
\end{proof}

\vspace{3mm}

If we consider $R(t)=t^{p}$, $t\geq 0$, $p>0$ we obtain

\begin{corollary}\label{cor_D_discret_neunif}
Let $p>0$. A skew-evolution semiflow $C$ is exponentially stable
if and only if there exist a constant $\rho>0$ and a sequence of
real numbers $(\alpha_{n})_{n\geq 0}$ with the property
$\alpha_{n}\geq 1, \ \forall n\geq 0$ such that
\begin{equation}
\sum_{k=n}^{m}e^{p\rho(k-n)}\left\Vert \Phi(k,n,x)v\right\Vert^
{p}\leq\alpha_{n} \left\Vert v\right\Vert^{p},
\end{equation}
for all $(m,n)\in \Delta$ and all $(x,v)\in Y$.
\end{corollary}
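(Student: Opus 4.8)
The plan is to obtain Corollary~\ref{cor_D_discret_neunif} as the special case $R(t)=t^{p}$ of Theorem~\ref{th_D_discret_neunif}. First I would check that this choice of $R$ is legitimate: for every $p>0$ the function $t\mapsto t^{p}$ is nondecreasing on $\mathbb{R}_{+}$, satisfies $R(0)=0$ and $R(t)>0$ for $t>0$, hence $R\in\mathcal{R}$. With this $R$ one has $R\!\left(e^{\rho(k-n)}\left\Vert\Phi(k,n,x)v\right\Vert\right)=e^{p\rho(k-n)}\left\Vert\Phi(k,n,x)v\right\Vert^{p}$ and $R(\left\Vert v\right\Vert)=\left\Vert v\right\Vert^{p}$, so the summation inequality in Theorem~\ref{th_D_discret_neunif} becomes verbatim the inequality in the statement of the corollary. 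Thus both implications reduce to the corresponding parts of the theorem.

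For the necessity I would either quote the necessity part of Theorem~\ref{th_D_discret_neunif} with $R(t)=t^{p}$, or argue directly from Proposition~\ref{caract_es_discret}: exponential stability provides $\mu>0$ and a sequence $(a_{n})_{n\geq 0}$, $a_{n}\geq 1$, with $\left\Vert\Phi(k,n,x)v\right\Vert\leq a_{n}e^{-\mu(k-n)}\left\Vert v\right\Vert$, so taking $\rho=\mu/2$ gives
\[
\sum_{k=n}^{m}e^{p\rho(k-n)}\left\Vert\Phi(k,n,x)v\right\Vert^{p}\leq a_{n}^{p}\left\Vert v\right\Vert^{p}\sum_{k=n}^{m}e^{-p\rho(k-n)}\leq\frac{a_{n}^{p}}{1-e^{-p\rho}}\left\Vert v\right\Vert^{p},
\]
and $\alpha_{n}=a_{n}^{p}\bigl(1-e^{-p\rho}\bigr)^{-1}\geq 1$ works since $a_{n}\geq 1$ and the geometric sum is at least its $k=n$ term. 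I would also note in passing that exponential stability forces exponential growth (take $M=N$ and $\omega\equiv 1$ in Definition~\ref{def_neg}), so the standing hypothesis of the theorem is automatically satisfied in this direction.

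For the sufficiency, the hypothesis of the corollary is exactly the hypothesis of Theorem~\ref{th_D_discret_neunif} for $R(t)=t^{p}$, so applying that theorem yields the exponential stability of $C$ at once. The only genuinely delicate point — and the one I expect to need the most care in a fully written-out argument — is the passage from the discrete estimate at the integer times $n=[t]$, $n_{0}=[t_{0}]$ back to all real $t\geq t_{0}$, which in the proof of the theorem is handled with the exponential-growth data $M,\omega$ of Definition~\ref{def_neg}; one must make sure these are available (they are, being part of the framework carried over from the theorem) so that the boundary terms $t-n,\ t-t_{0}\in[0,1)$ get absorbed into the constants. Modulo this bookkeeping, the corollary is an immediate consequence of Theorem~\ref{th_D_discret_neunif}.
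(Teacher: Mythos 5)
Your proposal is correct and follows exactly the paper's route: the corollary is obtained by specializing Theorem~\ref{th_D_discret_neunif} to $R(t)=t^{p}$, which the paper states in a single line. Your extra remarks -- verifying $R\in\mathcal{R}$, giving the direct geometric-sum estimate for necessity, and noting that exponential stability supplies the exponential growth hypothesis that the corollary's statement silently drops -- only make explicit what the paper leaves implicit.
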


\begin{theorem}\label{th_B_discret_neunif}
A skew-evolution semiflow $C$ is exponentially stable if and only
if there exist a function $R\in\mathcal{R}$, a constant $\gamma>0$
a sequence of real numbers $(\beta_{n})_{n\geq 0}$ with the
property $\beta_{n}\geq 1, \ \forall n\geq 0$ such that
\begin{equation}
\sum_{k=n}^{m}R\left(e^{\gamma(m-k)}\left\Vert
\Phi(m,k,\varphi(k,n,x))^{*}v^{*}\right\Vert\right)\leq\beta_{n}
R\left(\left\Vert v^{*}\right\Vert\right),
\end{equation}
for all $(m,n)\in \Delta$ and all $(x,v^{*})\in X\times V^{*}$.
\end{theorem}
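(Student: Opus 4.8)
The plan is to follow the scheme already used for Theorem~\ref{th_D_discret_neunif} and Proposition~\ref{caract_es_discret}, exploiting throughout that a bounded linear operator and its adjoint have the same norm, $\|A^{*}\|=\|A\|$. The equivalence will again reduce, on the discrete side, to an estimate for the operator norm $\|\Phi(m,n,x)\|$; the genuinely delicate direction is the necessity, where a single application of the nonuniform stability estimate to a cocycle segment starting at the summation index $k$ produces a constant that depends on $k$, and this has to be traded for one depending only on the initial time $n$.

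For the \emph{necessity} I would first record the elementary consequence of Definition~\ref{def_ns_nes}: putting $s=t_{0}$ in $(\ref{rel_exp_stab})$ gives $\|\Phi(t,t_{0},y)v\|\le N(t_{0})e^{-\nu(t-t_{0})}\|v\|$, hence $\|\Phi(t,t_{0},y)\|\le N(t_{0})e^{-\nu(t-t_{0})}$ and therefore $\|\Phi(t,t_{0},y)^{*}v^{*}\|\le N(t_{0})e^{-\nu(t-t_{0})}\|v^{*}\|$ for all $(t,t_{0})\in T$, $y\in X$, $v^{*}\in V^{*}$. Taking $R(t)=t$ and $\gamma=\nu/2$, the $k$-th summand is then dominated by $N(k)e^{-(\nu/2)(m-k)}\|v^{*}\|$, and the crux is to replace the $k$-dependent factor $N(k)$ by a quantity depending only on $n$: the natural attempt is to factor $\Phi(m,k,\varphi(k,n,x))$ along the orbit $\{\varphi(j,n,x)\}_{j\ge n}$ using the cocycle identity $(c_{2})$ and to apply the exponential stability relation with an intermediate time equal to $n$, so as to pull out a single decaying factor uniform in $k$; one then sums the resulting geometric series in $e^{-(\nu/2)(m-k)}$ to get the asserted inequality with $\beta_{n}$ independent of $m$ and $k$ (and enlarged, if needed, so that $\beta_{n}\ge 1$, which is anyway forced by the $k=m$ term, equal to $R(\|v^{*}\|)$). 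Making this factorization yield a genuinely $k$-free constant is where the nonuniformity must be handled with care, and I expect it to be the main obstacle.

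For the \emph{sufficiency} I would keep only the term $k=n$ of the hypothesis, obtaining $R\!\bigl(e^{\gamma(m-n)}\|\Phi(m,n,x)^{*}v^{*}\|\bigr)\le\beta_{n}R(\|v^{*}\|)$ for all $(m,n)\in\Delta$ and $(x,v^{*})\in X\times V^{*}$; since $\|\Phi(m,n,x)^{*}\|=\|\Phi(m,n,x)\|$, this is exactly an $R$-weighted estimate for the operator norm of $\Phi(m,n,x)$. From here the argument is the one used in the sufficiency part of Theorem~\ref{th_D_discret_neunif}: pass to the shifted semiflow $C_{-\gamma}=(\varphi,\Phi_{-\gamma})$ of $(\ref{relcevshift})$, use that $R\in\mathcal{R}$ is nondecreasing together with the exponential growth of $C$ to conclude that $C_{-\gamma}$ is stable, hence that $e^{\gamma(m-n)}\|\Phi(m,n,x)v\|\le a_{n}\|v\|$ for a sequence $(a_{n})$ with $a_{n}\ge 1$, and finally invoke Proposition~\ref{caract_es_discret} to pass from this discrete exponential decay back to the exponential stability of $C$. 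Once the $k=n$ term has been isolated this direction is routine.
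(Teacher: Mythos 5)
Your proposal has genuine gaps in both directions. In the sufficiency, the reduction to the single term $k=n$ does not work for a general $R\in\mathcal{R}$, and this is exactly where the content of the theorem lies. From $R\bigl(e^{\gamma(m-n)}\Vert\Phi(m,n,x)^{*}v^{*}\Vert\bigr)\le\beta_{n}R\bigl(\Vert v^{*}\Vert\bigr)$ you cannot extract a bound of the form $e^{\gamma(m-n)}\Vert\Phi(m,n,x)\Vert\le c_{n}$ unless $R$ is unbounded: the class $\mathcal{R}$ only requires $R$ nondecreasing with $R(0)=0$ and $R(t)>0$ for $t>0$, so e.g. $R(t)=t/(1+t)$ is admissible, and then $\beta_{n}R(\Vert v^{*}\Vert)\ge 1$ renders the single-term inequality vacuous. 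The paper's sufficiency uses the whole sum: combining the hypothesis with submultiplicativity of the cocycle along the orbit it arrives (in its notation) at $R\bigl(\Vert\Phi_{-\gamma}(m,n_{0},x)\Vert\bigr)\le\widetilde{M}^{2}/(m-n_{0}+1)$, i.e. it is the number of terms $m-n_{0}+1$ that forces smallness, and then a threshold comparison with $R\bigl(\tfrac{1}{2}\bigr)$ gives $\Vert\Phi_{-\gamma}(m,n_{0},x)\Vert\le\tfrac{1}{2}$ once $m-n_{0}$ is large, whence stability of $C_{-\gamma}$ and, as in the proof of Theorem \ref{th_D_discret_neunif}, exponential stability of $C$. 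If your single-term shortcut were valid, the summation over $k$ in the statement would be pointless; note also that this theorem, unlike Theorem \ref{th_D_discret_neunif}, does not put exponential growth of $C$ at your disposal, although you invoke it.

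In the necessity you have correctly located the difficulty but not removed it, and the repair you sketch cannot succeed: since $n\le k$, the cocycle identity only composes forward, $\Phi(m,n,x)=\Phi(m,k,\varphi(k,n,x))\Phi(k,n,x)$, so passing from the segment starting at $k$ to one starting at $n$ would require inverting $\Phi(k,n,x)$, which is not available; there is no "intermediate time $n$" between $k$ and $m$ to which the stability estimate could be applied. Thus any direct application of Definition \ref{def_ns_nes} or Proposition \ref{caract_es_discret} to $\Phi(m,k,\varphi(k,n,x))$ yields a factor $N(k)$ (resp. $a_{k}$), and your argument stops exactly at the point you flag as the main obstacle. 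The paper's necessity proof does not perform the $k$-to-$n$ trade you anticipate: it takes $R(t)=t$, $\gamma=\nu/2$, bounds each summand with the constant indexed by the initial time $n$ of the whole trajectory and sums the geometric series, obtaining $\beta_{n}=a_{n}/(1-e^{-\nu/2})$. Whether that indexing is fully justified in the nonuniform setting is a legitimate question (it is the same issue you identified), but as written your proposal leaves the necessity unproved, so it cannot be accepted as a proof of the stated equivalence.
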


\begin{proof} \emph{Necessity}. For $R(t)=t, \ t\geq 0$
and $\gamma=\frac{\nu}{2}>0$ we obtain, according to Definition
\ref{def_ns_nes} and Proposition \ref{caract_es_discret},
\[
\sum_{k=n}^{m}e^{\frac{\nu}{2}(m-k)}\left\Vert\Phi(m,k,\varphi(k,n,x))^{*}v^{*}\right\Vert
\leq a_{n}\left\Vert
v^{*}\right\Vert\sum_{k=n}^{m}e^{-\frac{\nu}{2}(m-k)}\leq\beta_{n}
\left\Vert v^{*}\right\Vert,
\]
where we have denoted
\[
\beta_{n}=\frac{a_{n}}{1-e^{-\frac{\nu}{2}}}, \ n\in \mathbb{N},
\]
where $\nu> 0$ and the sequence of real numbers $(a_{n})_{n\geq
0}$ with the property $a_{n}\geq 1, \ \forall n\geq 0$ are given
by Definition \ref{def_ns_nes}.

\emph{Sufficiency}. Let $C_{-\gamma}=(\varphi,\Phi_{-\gamma})$ be
given as in relation (\ref{relcevshift}). For all
$(k,n_{0})\in\Delta$ we obtain
\[
R\left(\left\Vert\Phi_{-\gamma}(k,n_{0},x)v^{*}\right\Vert\right)\leq
\beta_{n}R\left(\left\Vert v^{*}\right\Vert\right), \ \forall x\in
X, \forall v^{*}\in V^{*}.
\]
Further we have
\[
\sum_{k=n_{0}}^{m}R\left(\left\Vert\Phi_{-\gamma}(m,n_{0},x)\right\Vert\right)\leq
\sum_{k=n_{0}}^{m}R\left(\left\Vert\Phi_{-\gamma}(m,k,x)\right\Vert\left\Vert\Phi_{-\gamma}(k,n_{0},x)\right\Vert\right)\leq
\]
\[
\leq\widetilde{M}
\sum_{k=n_{0}}^{m}R\left(\left\Vert\Phi_{-\gamma}(m,k,x)\right\Vert\right)\leq
\widetilde{M}^{2},
\]
which implies
\[
R\left(\left\Vert\Phi_{-\gamma}(m,n_{0},x)\right\Vert\right)
\leq\frac{\widetilde{M}^{2}}{m-n_{0}+1}, \ \forall
(m,n_{0},x)\in\Delta\times X.
\]
There exist $n_{1}>n_{0}+1$ such that
\[
\frac{\widetilde{M}^{2}}{n_{1}-n_{0}+1}\leq
R\left(\frac{1}{2}\right).
\]
We obtain that $C_{-\gamma}$ is stable. According to the proof of
Theorem \ref{th_D_discret_neunif} it follows that $C$ is
exponentially stable.
\end{proof}

\vspace{3mm}

For $R(t)=t^{p}$, $t\geq 0$, $p>0$ we obtain

\begin{corollary}\label{cor_B_discret_neunif}
Let $p>0$. A skew-evolution semiflow $C$ is exponentially stable
if and only if there exist a constant $\gamma>0$ and a sequence of
real numbers $(\beta_{n})_{n\geq 0}$ with $\beta_{n}\geq 1, \
\forall n\geq 0$ such that
\begin{equation}
\sum_{k=n}^{m}e^{p\gamma(m-k)}\left\Vert
\Phi(m,k,\varphi(k,n,x))^{*}v^{*}\right\Vert^{p}\leq\beta_{n}
\left\Vert v^{*}\right\Vert^{p},
\end{equation}
for all $(m,n)\in \Delta$ and all $(x,v^{*})\in X\times V^{*}$.
\end{corollary}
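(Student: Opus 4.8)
The plan is to read this statement off from Theorem~\ref{th_B_discret_neunif} by choosing there the particular weight $R(t)=t^{p}$. The first step is to note that for $p>0$ this $R$ lies in $\mathcal{R}$: it is nondecreasing on $\mathbb{R}_{+}$, $R(0)=0$, and $R(t)>0$ for every $t>0$. The decisive feature of this choice is that $R$ is multiplicative, $R(ab)=R(a)R(b)$, so that for every $(m,k)\in\Delta$
\[
R\!\left(e^{\gamma(m-k)}\left\Vert\Phi(m,k,\varphi(k,n,x))^{*}v^{*}\right\Vert\right)=e^{p\gamma(m-k)}\left\Vert\Phi(m,k,\varphi(k,n,x))^{*}v^{*}\right\Vert^{p}
\]
and $R(\Vert v^{*}\Vert)=\Vert v^{*}\Vert^{p}$. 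Inserting these two identities into the inequality of Theorem~\ref{th_B_discret_neunif} transforms it word for word into the inequality in the present statement. Hence the \emph{sufficiency} is immediate: the hypothesis here is exactly the $R(t)=t^{p}$ instance of the hypothesis of Theorem~\ref{th_B_discret_neunif} (with the same constant $\gamma$ and the same sequence $(\beta_{n})_{n\geq0}$), and therefore $C$ is exponentially stable.

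For the \emph{necessity} one cannot merely quote the existence clause of Theorem~\ref{th_B_discret_neunif}, which only yields \emph{some} $R\in\mathcal{R}$; instead I would repeat its necessity argument with $R(t)=t^{p}$. Starting from the exponential stability of $C$, Proposition~\ref{caract_es_discret} produces a constant $\mu>0$ and a sequence $(a_{n})_{n\geq0}$ with $a_{n}\geq1$ such that $\Vert\Phi(m,k,y)w\Vert\leq a_{k}e^{-\mu(m-k)}\Vert w\Vert$ for all $(m,k)\in\Delta$, $y\in X$, $w\in V$; since a bounded operator and its adjoint have the same norm, this bounds $\Vert\Phi(m,k,\varphi(k,n,x))^{*}v^{*}\Vert$ by $a_{n}e^{-\mu(m-k)}\Vert v^{*}\Vert$, exactly as in the proof of Theorem~\ref{th_B_discret_neunif}. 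Raising this bound to the power $p$, multiplying by $e^{p\gamma(m-k)}$ and summing over $k$ from $n$ to $m$ gives
\[
\sum_{k=n}^{m}e^{p\gamma(m-k)}\left\Vert\Phi(m,k,\varphi(k,n,x))^{*}v^{*}\right\Vert^{p}\leq a_{n}^{p}\left\Vert v^{*}\right\Vert^{p}\sum_{k=n}^{m}e^{-p(\mu-\gamma)(m-k)},
\]
so that, choosing $\gamma=\mu/2>0$, the geometric series on the right is bounded by $\bigl(1-e^{-p\mu/2}\bigr)^{-1}$ uniformly in $m$; putting $\beta_{n}=a_{n}^{p}\bigl(1-e^{-p\mu/2}\bigr)^{-1}\geq1$ yields the required estimate.

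There is no genuine obstacle here: the proof is a routine specialization of the preceding theorem, entirely parallel to the way Corollary~\ref{cor_D_discret_neunif} was obtained from Theorem~\ref{th_D_discret_neunif}. The only point deserving a moment's attention is the necessity step, where one must ensure that the majorant can be written with a constant $\beta_{n}$ independent of $m$; this is guaranteed precisely because $\gamma$ is taken strictly below the decay rate $\mu$, so that the tail $\sum_{k=n}^{m}e^{-p(\mu-\gamma)(m-k)}$ stays controlled uniformly in $m$.
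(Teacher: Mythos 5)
Your proof is correct and follows essentially the paper's own route: the paper obtains this corollary simply by specializing Theorem \ref{th_B_discret_neunif} to $R(t)=t^{p}$, and your explicit rerun of the necessity computation with the power $p$ (taking $\gamma=\mu/2$ and summing the geometric series to get $\beta_{n}$) is just the paper's necessity argument for that theorem adapted to this choice of $R$. The one delicate step --- passing from the bound $a_{k}e^{-\mu(m-k)}$ on $\left\Vert\Phi(m,k,\varphi(k,n,x))\right\Vert$ to a constant depending only on $n$ --- is taken over verbatim from the paper's own proof of Theorem \ref{th_B_discret_neunif}, so it is not a gap peculiar to your argument.
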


\section{Nonuniform discrete exponential instability}

\begin{definition}\rm\label{i}\label{cis}
A skew-evolution semiflow $C$ is said to be

$(is)$ \emph{instable} if there exists a mapping
$N:\mathbb{R}_{+}\rightarrow\mathbb{R}_{+}^{*}$ such that
\begin{equation}\label{rel_instab}
N(t)\left\Vert \Phi(t,t_{0},x)v\right\Vert \geq \left\Vert
\Phi(s,t_{0},x)v\right\Vert,
\end{equation}
for all $(t,s),(s,t_{0})\in  T$ and all $(x,v)\in Y$;

$(eis)$ \emph{exponentially instable} if there exist a mapping
$N:\mathbb{R}_{+}\rightarrow\mathbb{R}_{+}^{*}$ and a constant
$\nu> 0 $ such that
\begin{equation}\label{rel_exp_instab}
N(t)\left\Vert\Phi(t,t_{0},x)v\right\Vert\geq
e^{\nu(t-s)}\left\Vert\Phi(s,t_{0},x)v\right\Vert,
\end{equation}
for all $(t,s),(s,t_{0})\in T$ and all $(x,v)\in Y$.
\end{definition}

\begin{example}\rm\label{ex_nueis1}
Let $ X=\mathbb{R}_{+}$ and $V=\mathbb{R}$. We consider the
function
\[
f:\mathbb{R}_{+}\rightarrow[1,\infty), \ f(n)=1 \ \textrm{and} \
f\left(n+\frac{1}{e^{n^{2}}}\right)=e^{2n}
\]
and the mapping
\[
\Phi_{f}: T\times\mathbb{R}_{+}\rightarrow\mathcal{B}(
\mathbb{R}), \ \Phi_{f}(t,s,x)v=\frac{f(s)}{f(t)}e^{(t-s)}v.
\]
Then $C_{f}=(\varphi,\Phi_{f})$ is a skew-evolution semiflow on
$Y=\mathbb{R}_{+}\times \mathbb{R}$ over every evolution semiflow
$\varphi$ on $\mathbb{R}_{+}$. As
\[
\left | \Phi_{f}(t,s,x)v\right |\geq \frac{1}{f(t)}e^{(t-s)}|v|, \
\forall (t,s,x,v)\in T\times Y,
\]
it follows that $C_{f}$ is exponentially instable.
\end{example}

\begin{definition}\rm\label{def_nedc}
A skew-evolution semiflow $C$ has \emph{exponential decay} if
there exist the mappings $M$,
$\omega:\mathbb{R}_{+}\rightarrow\mathbb{R}_{+}^{*}$ such that
\begin{equation}\label{rel_nedc}
\left\Vert \Phi(s,t_{0},x)v\right\Vert \leq M(t)e^{\omega(t)
(t-s)}\left\Vert \Phi(t,t_{0},x)v\right\Vert,
\end{equation}%
for all $(t,s),(s,t_{0})\in  T$ and all $(x,v)\in Y$.
\end{definition}

In order to describe the instability of skew-evolution semiflows
in discrete time, we will consider first the next

\begin{proposition}\label{caract_eis_discret}
A skew-evolution semiflow with exponential decay $C$ is
exponentially instable if and only if there exist a constant $\mu>
0$ and a sequence of real numbers $(a_{n})_{n\geq 0}$ with the
property $a_{n}\geq 1, \ \forall n\geq 0$ such that
\begin{equation}
\left\Vert\Phi(n,n_{0},x)v\right\Vert\leq
a_{m}e^{-\mu(m-n_{0})}\left\Vert\Phi(m,n_{0},x)v\right\Vert,
\end{equation}
for all $(m,n),(n,n_{0})\in\Delta$ and all $(x,v)\in Y$.
\end{proposition}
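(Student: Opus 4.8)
The plan is to follow the template of Proposition~\ref{caract_es_discret}, interchanging the roles of exponential growth and exponential decay and of stability and instability.

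\emph{Necessity.} Assuming $C$ exponentially instable, after replacing $N$ by $\max\{N,1\}$ in $(\ref{rel_exp_instab})$ we may suppose $N\geq 1$. For $(m,n),(n,n_{0})\in\Delta$ I would specialise $(\ref{rel_exp_instab})$ to $t=m$, $s=n$, $t_{0}=n_{0}$ and obtain $N(m)\Vert\Phi(m,n_{0},x)v\Vert\geq e^{\nu(m-n)}\Vert\Phi(n,n_{0},x)v\Vert$. The desired inequality carries the exponent $m-n_{0}$ instead of $m-n$, so I split $\nu(m-n)=\nu(m-n_{0})-\nu(n-n_{0})$ and use $0\leq n-n_{0}\leq n\leq m$ to absorb the excess as $e^{-\nu(m-n)}\leq e^{\nu m}e^{-\nu(m-n_{0})}$; then $\mu:=\nu$ and $a_{m}:=N(m)e^{\nu m}\geq 1$ work. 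No use of exponential decay is needed here, exactly as in Proposition~\ref{caract_es_discret}.

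\emph{Sufficiency.} Fix reals $t\geq s\geq t_{0}\geq 0$; the goal is a bound $\Vert\Phi(s,t_{0},x)v\Vert\leq N(t)e^{-\mu(t-s)}\Vert\Phi(t,t_{0},x)v\Vert$, which is $(\ref{rel_exp_instab})$ with $\nu=\mu$. If $t-s<1$ the exponential decay of Definition~\ref{def_nedc} already gives $\Vert\Phi(s,t_{0},x)v\Vert\leq M(t)e^{\omega(t)}\Vert\Phi(t,t_{0},x)v\Vert$, and inserting $1\leq e^{\mu}e^{-\mu(t-s)}$ settles this case. If $t-s\geq 1$ I would take the integer sandwich $n_{0}=\lceil t_{0}\rceil$, $n=\lceil s\rceil$, $m=[t]$; from $t\geq s+1\geq t_{0}+1$ one checks $m\geq n\geq n_{0}$, so $(m,n),(n,n_{0})\in\Delta$. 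I transport $v$ from time $t_{0}$ to the integer $n_{0}$ by the evolution cocycle relation $(c_{2})$, writing $\Phi(n,t_{0},x)v=\Phi(n,n_{0},y)w$ and $\Phi(m,t_{0},x)v=\Phi(m,n_{0},y)w$ with $y=\varphi(n_{0},t_{0},x)$ and $w=\Phi(n_{0},t_{0},x)v$; applying the discrete hypothesis to $(y,w)$ gives $\Vert\Phi(n,t_{0},x)v\Vert\leq a_{m}e^{-\mu(m-n_{0})}\Vert\Phi(m,t_{0},x)v\Vert$. I then use exponential decay twice, from $s$ up to $n$ and from $m$ up to $t$, each time losing only a factor $M(\cdot)e^{\omega(\cdot)}$, and I note $m-n_{0}\geq(t-t_{0})-2\geq(t-s)-2$, so $e^{-\mu(m-n_{0})}\leq e^{2\mu}e^{-\mu(t-s)}$.

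Putting the three estimates together yields $\Vert\Phi(s,t_{0},x)v\Vert\leq K(t)\,e^{-\mu(t-s)}\Vert\Phi(t,t_{0},x)v\Vert$, where $K(t)$ is a product of $e^{2\mu}$ with $M$ and $e^{\omega}$ evaluated at $n$, $m$, $t$ and with $a_{m}$; since $n_{0},n,m\in\{0,1,\dots,[t]\}$, the values $M(n),\omega(n),a_{m}$ are dominated by the finite maxima of these quantities over integers in $[0,[t]]$, so $K(t)$ — enlarged to also cover the case $t-s<1$ — depends on $t$ alone, and taking it for $N(t)$ finishes the argument. The delicate point is not any single computation but the bookkeeping: one must choose the integer sandwich so that $m\geq n\geq n_{0}\geq t_{0}$, $n\geq s$ and $m\leq t$ all hold for every admissible triple $(t,s,t_{0})$, and must check that each nonuniform constant introduced along the way genuinely depends on $t$ only, not on $s$ or $t_{0}$ — which is exactly what forces the choices $n_{0}=\lceil t_{0}\rceil$, $n=\lceil s\rceil$, $m=[t]$.
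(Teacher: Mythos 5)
Your argument is correct and follows essentially the same route as the paper's: discretize with integer parts, apply the discrete hypothesis at integer times, and use the exponential decay of Definition \ref{def_nedc} to bridge the gaps of length less than one, the nonuniform constant being allowed to depend on $t$ only. You are in fact more careful than the printed proof on two points: in the necessity, the substitution $t=m$, $s=n$, $t_{0}=n_{0}$ only yields the exponent $-\nu(m-n)$, so your extra factor $e^{\nu m}$ absorbed into $a_{m}$ is genuinely needed for the stated exponent $-\mu(m-n_{0})$; and you prove the instability inequality for arbitrary $s\in[t_{0},t]$ directly, whereas the paper only treats $s=t_{0}$ and leaves implicit the reduction of the general case via the cocycle identity applied at $\varphi(s,t_{0},x)$ and $\Phi(s,t_{0},x)v$.
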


\begin{proof}
\emph{Necessity}. It is obtained if we consider in relation
$(\ref{rel_exp_instab})$ $t=m$, $s=n$, $t_{0}=n_{0}$ and if we
define
\[
a_{m}=N(m), \ m\in\mathbb{N} \ \textrm{\c{s}i} \ \mu=\nu>0,
\]
where the existence of function
$N:\mathbb{R}_{+}\rightarrow\mathbb{R}_{+}^{*}$ and of constant
$\nu$ is given by Definition \ref{cis}.

\emph{Sufficiency}. As a first step, we consider $t\geq t_{0}+1$
and we denote $n=[t]$ respectively $n_{0}=[t_{0}]$. We obtain
\[
n\leq t<n+1, \ n_{0}\leq t_{0}<n_{0}+1, \ n_{0}+1\leq n.
\]
It follows that
\[
\left\Vert\Phi(t,t_{0},x)v\right\Vert=
\]
\[
=\left\Vert\Phi(t,n,\varphi(n,n_{0}+1,x))\Phi(n,n_{0}+1,\varphi(n_{0}+1,t_{0},x))\Phi(n_{0}+1,t_{0},x)v\right\Vert\geq
\]
\[
\geq
[M(t)]^{-1}e^{-\omega(t-n)}[M(t)]^{-1}e^{-\omega(n_{0}+1-t_{0})}\left\Vert\Phi(n,n_{0}+1,x)v\right\Vert\geq
\]
\[
\geq
\frac{[M(t)]^{-2}}{\widetilde{N}}e^{-2\omega}e^{\widetilde{\nu}(n-n_{0}+1)}\left\Vert
v\right\Vert\geq
\frac{e^{\widetilde{\nu}}}{[M(t)]^{2}\widetilde{N}e^{2\omega}}e^{\widetilde{\nu}(t-t_{0})}\left\Vert
v\right\Vert,
\]
for all $(x,v)\in Y$, where the existence of function $M$ and of
constant $\omega$ is assured by Definition \ref{def_nedc}.

As a second step, if we consider $t\in[t_{0},t_{0}+1)$ we obtain
\[
M\left\Vert\Phi(t,t_{0},x)v\right\Vert\geq
e^{-\omega(t-t_{0})}\left\Vert v\right\Vert\geq
e^{-(\widetilde{\nu}+\omega)}e^{\widetilde{\nu}(t-t_{0})}\left\Vert
v\right\Vert,
\]
for all $(x,v)\in Y$.

Hence
\[
N\left\Vert\Phi(t,t_{0},x)v\right\Vert\geq
e^{\nu(t-t_{0})}\left\Vert v\right\Vert,
\]
for all $(t,t_{0},x,v)\in  T\times Y$, where we have denoted
\[
N=Me^{(\widetilde{\nu}+\omega)}+M^{2}\widetilde{N}e^{(-\widetilde{\nu}+2\omega)}
\ \textrm{\c{s}i} \ \nu=\widetilde{\nu},
\]
which proves the exponential instability of $C$.
\end{proof}

\begin{theorem}\label{th_D_is_discret_neunif}
A skew-evolution semiflow $C$ is exponentially instable if and
only if there exist a function $R\in\mathcal{R}$, a constant
$\rho<0$ and a sequence of real numbers $(\alpha_{n})_{n\geq 0}$
with the property $\alpha_{n}\geq 1, \ \forall n\geq 0$ such that
\begin{equation}
\sum_{k=n}^{m}R\left(e^{-\rho(m-k)}\left\Vert
\Phi(k,n,x)v\right\Vert\right)\leq R\left(\alpha_{m}\left\Vert
\Phi(m,n,x)v\right\Vert\right),
\end{equation}
for all $(m,n)\in\Delta$ and all $(x,v)\in Y$.
\end{theorem}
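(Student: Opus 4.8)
The plan is to reverse the signs in the proof of Theorem~\ref{th_D_discret_neunif}, using Proposition~\ref{caract_eis_discret} as the bridge between a discrete estimate and exponential instability (note that $\rho<0$ turns $e^{-\rho(m-k)}$ into a growth factor, as befits the instability setting). For \emph{necessity}, I would take $R(t)=t$. Proposition~\ref{caract_eis_discret} supplies $\mu>0$ and a sequence $(a_{n})_{n\ge 0}$ with $a_{n}\ge 1$ such that $\|\Phi(k,n,x)v\|\le a_{m}e^{-\mu(m-k)}\|\Phi(m,n,x)v\|$ for all $n\le k\le m$ and all $(x,v)\in Y$. With $\rho=-\mu/2<0$, summing a geometric series gives
\[
\sum_{k=n}^{m}e^{-\rho(m-k)}\|\Phi(k,n,x)v\|\le a_{m}\|\Phi(m,n,x)v\|\sum_{j=0}^{m-n}e^{-\mu j/2}\le\alpha_{m}\|\Phi(m,n,x)v\|,
\]
with $\alpha_{m}:=a_{m}(1-e^{-\mu/2})^{-1}\ge 1$; since $R$ is the identity, this is the claimed inequality.

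For \emph{sufficiency}, I would run the computation backwards. Every summand on the left being nonnegative, the term $k=n$ alone (recall $\Phi(n,n,x)=I$) yields $R(e^{-\rho(m-n)}\|v\|)\le R(\alpha_{m}\|\Phi(m,n,x)v\|)$ for all $(m,n)\in\Delta$, $(x,v)\in Y$; replacing $n$ by $s$, $x$ by $\varphi(s,n_{0},x)$ and $v$ by $\Phi(s,n_{0},x)v$ and using the cocycle identity $(c_{2})$ upgrades this to
\[
R\bigl(e^{-\rho(m-s)}\|\Phi(s,n_{0},x)v\|\bigr)\le R\bigl(\alpha_{m}\|\Phi(m,n_{0},x)v\|\bigr),\qquad n_{0}\le s\le m.
\]
Since $R(0)=0$ and $R(t)>0$ for $t>0$, this first forces each $\Phi(m,n_{0},x)$ to be injective; then, passing to the shifted skew-evolution semiflow $C_{-\rho}$ of (\ref{relcevshift}) and exploiting the monotonicity of $R$ exactly as in the sufficiency arguments of Theorems~\ref{th_D_discret_neunif} and \ref{th_B_discret_neunif}, one extracts $\|\Phi(s,n_{0},x)v\|\le\alpha_{m}e^{\rho(m-s)}\|\Phi(m,n_{0},x)v\|$ for all $n_{0}\le s\le m$. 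Writing $a_{m}:=\alpha_{m}\ge 1$ and $\mu:=-\rho>0$, this is precisely the estimate of Proposition~\ref{caract_eis_discret}, whence $C$ is exponentially instable.

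The step I expect to be the real obstacle is the one common to all these characterizations: deducing $a\le b$ from $R(a)\le R(b)$ for an $R\in\mathcal R$ that is merely nondecreasing. This cannot be carried out at a single pair $(m,s)$; instead one works in $C_{-\rho}$ and combines the monotonicity of $R$ with the structural properties $R(0)=0$ and $R(t)>0$ for $t>0$ — the same device that, in the proof of Theorem~\ref{th_B_discret_neunif}, turns a bound of the form $R(\|\Phi_{-\gamma}(m,n_{0},x)\|)\le\widetilde M^{2}/(m-n_{0}+1)$ into genuine stability of $C_{-\gamma}$. A minor caveat is that Proposition~\ref{caract_eis_discret} is stated for skew-evolution semiflows with exponential decay (Definition~\ref{def_nedc}); either that hypothesis is to be carried along here as well, or one notes that the discrete estimate just derived, together with Definition~\ref{def_nedc}, closes the continuous-time gap.
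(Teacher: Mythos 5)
Your proposal follows essentially the same route as the paper: the necessity is identical (take $R(t)=t$, $\rho=-\mu/2$, sum a geometric series), and the sufficiency likewise isolates the $k=n$ term of the sum and feeds the resulting estimate back through Proposition~\ref{caract_eis_discret} to recover exponential instability. You are in fact somewhat more explicit than the paper, which stops at the $k=n$ term and appeals to ``the properties of $R$''; the two caveats you flag --- inverting $R(a)\le R(b)$ when $R$ is merely nondecreasing, and the exponential decay hypothesis needed to pass from the discrete estimate to the continuous-time definition of instability --- are precisely the points the paper's own argument glosses over.
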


\begin{proof} \emph{Necessity}. Let $R(t)=t, \ t\geq 0$.
Proposition \ref{caract_eis_discret} assures the existence of a
constant $\nu> 0$ and of a sequence of real numbers
$(a_{n})_{n\geq 0}$ with the property $a_{n}\geq 1, \ \forall
n\geq 0$. We obtain for $\rho=-\frac{\nu}{2}>0$
\[
\sum_{k=n}^{m}e^{-\rho(m-k)}\left\Vert\Phi(k,n,x)v\right\Vert \leq
a_{m} \sum_{k=n}^{m}e^{-\rho(m-k)}e^{-\nu(m-k)}\left\Vert
\Phi(m,n,x)v\right\Vert =
\]
\[
=a_{m}\left\Vert \Phi(m,n,x)v\right\Vert
\sum_{k=n}^{m}e^{-\frac{\nu}{2}(k-n)}\leq \alpha_{m}\left\Vert
\Phi(m,n,x)v\right\Vert,
\]
for all $(m,n) \in \Delta$ and all $(x,v)\in Y$, where we have
denoted
\[
\alpha_{m}=\frac{1}{1-e^{\rho}}a_{m}, \ m\in \mathbb{N}.
\]

\emph{Sufficiency}. According to the hypothesis, if we consider
$k=n$ we obtain
\[
R\left(e^{-\rho(m-n)}\left\Vert\Phi(n,n,x)v\right\Vert\right)\leq
R\left(a_{m}\left\Vert \Phi(m,n,x)v\right\Vert\right),
\]
for all $(m,n,x,v)\in\Delta\times Y$, which implies, by means of
the properties of function $R$, the exponential instability of $C$
and ends the proof.
\end{proof}

\vspace{3mm}

For $R(t)=t^{p}$, $t\geq 0$, $p>0$ it is obtained

\begin{corollary}\label{cor_D_is_discret_neunif}
Let $p>0$. A skew-evolution semiflow $C$ is exponentially instable
if and only if there exist a constant $\rho<0$ and a sequence of
real numbers $(\alpha_{n})_{n\geq 0}$ with the property
$\alpha_{n}\geq 1, \ \forall n\geq 0$ such that
\begin{equation}
\sum_{k=n}^{m} e^{-p\rho(m-k)}\left\Vert
\Phi(k,n,x)v\right\Vert^{p}\leq  \alpha_{m}\left\Vert
\Phi(m,n,x)v\right\Vert^{p},
\end{equation}
for all $(m,n)\in\Delta$ and all $(x,v)\in Y$.
\end{corollary}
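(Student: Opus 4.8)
The plan is to obtain this corollary as the special case $R(t)=t^{p}$ of Theorem \ref{th_D_is_discret_neunif}, in the same spirit in which Corollary \ref{cor_D_discret_neunif} and Corollary \ref{cor_B_discret_neunif} are deduced from their respective theorems. The first point to check is that the power function is admissible: for every $p>0$ the map $t\mapsto t^{p}$ is nondecreasing on $\mathbb{R}_{+}$, vanishes at $t=0$, and is strictly positive for $t>0$, hence $R\in\mathcal{R}$ and Theorem \ref{th_D_is_discret_neunif} applies with this choice.

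For the necessity, if $C$ is exponentially instable then Theorem \ref{th_D_is_discret_neunif} used with $R(t)=t^{p}$ provides a constant $\rho<0$ and a sequence $(\alpha_{n})_{n\geq 0}$ with $\alpha_{n}\geq 1$ satisfying
\[
\sum_{k=n}^{m}\bigl(e^{-\rho(m-k)}\left\Vert\Phi(k,n,x)v\right\Vert\bigr)^{p}\leq\bigl(\alpha_{m}\left\Vert\Phi(m,n,x)v\right\Vert\bigr)^{p}
\]
for all $(m,n)\in\Delta$ and all $(x,v)\in Y$. Expanding the $p$-th powers gives $\sum_{k=n}^{m}e^{-p\rho(m-k)}\left\Vert\Phi(k,n,x)v\right\Vert^{p}\leq\alpha_{m}^{p}\left\Vert\Phi(m,n,x)v\right\Vert^{p}$, and since $\alpha_{m}\geq 1$ implies $\alpha_{m}^{p}\geq 1$, renaming $\alpha_{m}^{p}$ as $\alpha_{m}$ yields exactly the stated inequality.

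For the sufficiency, suppose the displayed inequality of the corollary holds for some $\rho<0$ and some $(\alpha_{n})_{n\geq 0}$ with $\alpha_{n}\geq 1$. Here, unlike in Corollaries \ref{cor_D_discret_neunif} and \ref{cor_B_discret_neunif}, the normalizing factor sits inside $R$, so a small reparametrization is needed: put $\beta_{m}=\alpha_{m}^{1/p}$, which is still $\geq 1$ because $p>0$. Then $e^{-p\rho(m-k)}\left\Vert\Phi(k,n,x)v\right\Vert^{p}=R\bigl(e^{-\rho(m-k)}\left\Vert\Phi(k,n,x)v\right\Vert\bigr)$ and $\alpha_{m}\left\Vert\Phi(m,n,x)v\right\Vert^{p}=R\bigl(\beta_{m}\left\Vert\Phi(m,n,x)v\right\Vert\bigr)$, so the hypothesis becomes precisely the estimate required in Theorem \ref{th_D_is_discret_neunif} with $R(t)=t^{p}$, and that theorem gives the exponential instability of $C$. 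I do not expect any real obstacle here: the whole argument reduces to the substitution $R(t)=t^{p}$ and the elementary observation that raising the normalizing sequence to the power $p$ (or to $1/p$) preserves the bound $\geq 1$, so no information is lost in either direction.
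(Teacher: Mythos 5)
Your proposal is correct and follows essentially the same route as the paper, which obtains the corollary simply by specializing Theorem \ref{th_D_is_discret_neunif} to $R(t)=t^{p}$ (exactly as Corollaries \ref{cor_D_discret_neunif} and \ref{cor_B_discret_neunif} are obtained). Your extra care in replacing $\alpha_{m}$ by $\alpha_{m}^{p}$, respectively $\beta_{m}=\alpha_{m}^{1/p}\geq 1$, to reconcile the placement of the normalizing sequence inside or outside $R$ is a detail the paper leaves implicit, and it is handled correctly.
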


\section{Nonuniform discrete exponential dichotomy}

\begin{definition}\rm\label{prinv}
A projector $P$ on $Y$ is called \emph{invariant} relative to a
skew-evolution semiflow $C=(\varphi,\Phi)$ if following relation
\begin{equation}
P(\varphi(t,s,x))\Phi(t,s,x)=\Phi(t,s,x)P(x),
\end{equation}
holds for all $(t,s)\in  T$ and all $x\in  X$.
\end{definition}

\begin{definition}\rm\label{comp_pr_dich}
Two projectors $P_{1}$ and $P_{2}$ are said to be
\emph{compatible} with a skew-evolution semiflow
$C=(\varphi,\Phi)$ if

$(d_{1})$ projectors $P_{1}$ and $P_{2}$ are invariant on $Y$;

$(d_{2})$ for all $x\in  X$, the projections $P_{1}(x)$ and
$P_{2}(x)$ verify the relations
\[
P_{1}(x)+P_{2}(x)=I \ \textrm{and}\
P_{1}(x)P_{2}(x)=P_{2}(x)P_{1}(x)=0.
\]
\end{definition}

\begin{definition}\rm\label{def_d_ed}
A skew-evolution semiflow $C=(\varphi,\Phi)$ is called

$(d)$ \emph{dichotomic} if there exist some functions $N_{1}$,
$N_{2}:\mathbb{R}_{+}\rightarrow \mathbb{R}_{+}^{\ast }$, two
projectors $P_{1}$ and $P_{2}$ compatible with $C$ such that
\begin{equation}\label{d_stab}
\left\Vert \Phi(t,t_{0},x)P_{1}(x)v\right\Vert \leq
N_{1}(s)\left\Vert \Phi(s,t_{0},x)P_{1}(x)v\right\Vert
\end{equation}
\begin{equation}\label{d_instab}
\left\Vert \Phi(s,t_{0},x)P_{2}(x)v\right\Vert \leq
N_{2}(t)\left\Vert \Phi(t,t_{0},x)P_{2}(x)v\right\Vert,
\end{equation}
for all $(t,s),(s,t_{0})\in T$ and all $(x,v)\in Y$;

$(ed) $\emph{exponentially dichotomic} if there exist functions
$N_{1}$, $N_{2}:\mathbb{R}_{+}\rightarrow \mathbb{R}_{+}^{\ast }$,
constants $\nu_{1}$, $\nu_{2}>0$ and two projectors $P_{1}$ and
$P_{2}$ compatible with $C$ such that
\begin{equation}\label{dich_stab}
e^{\nu_{1}(t-s)}\left\Vert \Phi(t,t_{0},x)P_{1}(x)v\right\Vert
\leq N_{1}(s)\left\Vert \Phi(s,t_{0},x)P_{1}(x)v\right\Vert
\end{equation}
\begin{equation}\label{dich_instab}
e^{\nu_{2}(t-s)}\left\Vert \Phi(s,t_{0},x)P_{2}(x)v\right\Vert
\leq N_{2}(t)\left\Vert \Phi(t,t_{0},x)P_{2}(x)v\right\Vert
\end{equation}
for all $(t,s),(s,t_{0})\in T$ and all $(x,v)\in Y$.
\end{definition}

\begin{remark}\rm
An exponentially dichotomic skew-evolution semiflow is dichotomic.
\end{remark}

\begin{example}\rm\label{ex_nued}
Let $ X=\mathbb{R}_{+}$ and $V=\mathbb{R}^{2}$ endowed with the
norm
\begin{equation*}
\left\Vert(v_{1},v_{2})\right\Vert=|v_{1}|+|v_{2}|, \
v=(v_{1},v_{2})\in V.
\end{equation*}
The mapping $\Phi: T\times  X\rightarrow \mathcal{B}(V)$, given by
\begin{equation*}
\Phi(t,s,x)(v_{1},v_{2})=(e^{t\sin t-s\sin s-2t+2s
}v_{1},e^{2t-2s-3t\cos t+3s\cos s}v_{2})
\end{equation*}
is an evolution cocycle over every evolution semiflow $\varphi$.
We consider the projectors
\begin{equation*}
P_{1}(x)(v_{1},v_{2})=(v_{1},0) \ \textrm{and} \
P_{2}(x)(v_{1},v_{2})=(0,v_{2}).
\end{equation*}
As
\[
t\sin t-s\sin s-2t+2s\leq -t+3s, \ \forall (t,s)\in T,
\]
we obtain that
\[
\left\Vert\Phi(t,s,x)P_{1}(x)v\right\Vert\leq e^{2s}e^{-(t-s)}|
v_{1}|,\ \forall (t,s,x,v)\in T\times Y.
\]
Similarly, as
\[
2t-2s-3t\cos t+3s\cos s\geq -t-5s, \ \forall (t,s)\in T
\]
it follows that
\[
e^{6t}\left \Vert\Phi(t,s,x)P_{2}(x)v\right\Vert \geq
e^{5(t-s)}|v_{2}|,\ \forall (t,s,x,v)\in T\times Y.
\]
The skew-evolution semiflow $C=(\varphi,\Phi)$ is exponentially
dichotomic with characteristics
\begin{equation*}
N(u)=e^{6u} \ \textrm{and} \  \nu=1.
\end{equation*}
\end{example}

\vspace{3mm}

In what follows, let us denote
\[
C_{k}(t,s,x,v)=(\varphi(t,s,x),\Phi_{k}(t,s,x)v), \ \forall
(t,t_{0},x,v)\in  T\times Y, \ \forall k\in \{1,2\},
\]
where
\[
\Phi_{k}(t,t_{0},x)=\Phi(t,t_{0},x)P_{k}(x), \ \forall
(t,t_{0})\in  T, \ \forall x\in  X, \ \forall k\in \{1,2\}.
\]

In discrete time, we will describe the property of exponential
dichotomy as given in the next

\begin{proposition} \label{d_trdscr}
A skew-evolution semiflow $C =(\varphi,\Phi)$ is exponentially
dichotomic if and only if there exist two projectors
$\{P_{k}\}_{k\in \{1,2\}}$, compatible with $C$, constants
$\nu_{1}\leq 0\leq \nu_{2}$ and a sequence of real positive
numbers $(a_{n})_{n\geq 0}$ such that

$(d_{1}')$
\begin{equation}
\left\Vert \Phi(m,n,x)P_{1}(x)v\right\Vert \leq a_{n}\left\Vert
P_{1}(x)v\right\Vert e^{\nu_{1}(m-n)}
\end{equation}

$(d_{2}')$
\begin{equation}
\left\Vert P_{2}(x)v\right\Vert \leq a_{m}\left\Vert
\Phi(m,n,x)P_{2}(x)v\right\Vert e^{-\nu_{2}(m-n)}
\end{equation}
for all $(m,n)\in \Delta$ and all $(x,v)\in Y$.
\end{proposition}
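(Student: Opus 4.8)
The plan is to follow the scheme of Propositions \ref{caract_es_discret} and \ref{caract_eis_discret}, treating the stable part $C_{1}=(\varphi,\Phi_{1})$ and the unstable part $C_{2}=(\varphi,\Phi_{2})$ separately and then reading off exponential dichotomy of $C$ from the two resulting continuous estimates, the projectors $\{P_{k}\}$ being supplied by hypothesis and already compatible with $C$.

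\emph{Necessity} should be immediate: one specializes the defining inequalities (\ref{dich_stab}) and (\ref{dich_instab}) to integer arguments, taking $t=m$, $s=t_{0}=n$ and using $\Phi(n,n,x)=I$. This gives $e^{\nu_{1}(m-n)}\left\Vert\Phi(m,n,x)P_{1}(x)v\right\Vert\leq N_{1}(n)\left\Vert P_{1}(x)v\right\Vert$ and $e^{\nu_{2}(m-n)}\left\Vert P_{2}(x)v\right\Vert\leq N_{2}(m)\left\Vert\Phi(m,n,x)P_{2}(x)v\right\Vert$; taking $a_{n}=\max\{1,N_{1}(n),N_{2}(n)\}$, using $-\nu_{1}\leq 0$ as the exponent in $(d_{1}')$ and keeping $\nu_{2}\geq 0$ in $(d_{2}')$, both discrete inequalities follow at once.

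For \emph{sufficiency}, the first step is to use the invariance of $P_{1}$ and $P_{2}$ (Definition \ref{prinv}) together with the cocycle identity $(c_{2})$ to reduce the continuous estimates to estimates issued from, respectively terminating at, an integer base point: from $\Phi(t,t_{0},x)P_{1}(x)v=\Phi(t,s,\varphi(s,t_{0},x))P_{1}(\varphi(s,t_{0},x))\Phi(s,t_{0},x)v$ and $\Phi(s,t_{0},x)P_{1}(x)v=P_{1}(\varphi(s,t_{0},x))\Phi(s,t_{0},x)v$, it suffices to prove $\left\Vert\Phi(t,s,y)P_{1}(y)w\right\Vert\leq N_{1}'(s)e^{-\nu_{1}'(t-s)}\left\Vert P_{1}(y)w\right\Vert$ for all $(t,s)\in T$, $y\in X$, $w\in V$, and symmetrically, via $P_{2}$, that $\left\Vert P_{2}(y)w\right\Vert\leq N_{2}'(t)e^{-\nu_{2}'(t-s)}\left\Vert\Phi(t,s,y)P_{2}(y)w\right\Vert$. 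The second step is the unit-interval interpolation already carried out in Propositions \ref{caract_es_discret} and \ref{caract_eis_discret}: for $t\geq s+1$ one inserts $n=[t]$, $n_{0}=[s]$, applies $(d_{1}')$ (respectively $(d_{2}')$) between $n_{0}+1$ and $n$, and absorbs the two fractional factors $\Phi(t,n,\cdot)$ and $\Phi(n_{0}+1,s,\cdot)$ into the constants via the growth and decay characteristics; the range $t\in[s,s+1)$ is handled directly; a splitting of the exponents as in Theorem \ref{th_D_discret_neunif} then produces strictly positive rates $\nu_{1}',\nu_{2}'$, which is exactly $(ed)$ for the given projectors.

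The main obstacle is precisely this interpolation: absorbing $\Phi(t,n,\cdot)$ on the stable fibre and $\Phi(n_{0}+1,s,\cdot)$ on the unstable fibre requires an exponential-growth bound in the sense of Definition \ref{def_neg} for $C_{1}$ and an exponential-decay bound in the sense of Definition \ref{def_nedc} for $C_{2}$; since $(d_{1}')$ and $(d_{2}')$ give control only at integer instants, these bounds have to be available for $C$, exactly as in the hypotheses of Propositions \ref{caract_es_discret} and \ref{caract_eis_discret}. A further point requiring care is that the dichotomy rates must be taken strictly off zero, so if $\nu_{1}=0$ or $\nu_{2}=0$ one first passes to a shifted cocycle $C_{-\rho}$ as in relation (\ref{relcevshift}); and one must keep the indexing of the sequence $(a_{n})$ consistent when the projectors are transported from $x$ to $\varphi(s,t_{0},x)$ through invariance.
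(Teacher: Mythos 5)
Your proof follows essentially the same route as the paper: necessity by specializing the continuous estimates $(\ref{dich_stab})$ and $(\ref{dich_instab})$ at $t=m$, $s=t_{0}=n$, and sufficiency by running the discrete-to-continuous interpolation of Propositions \ref{caract_es_discret} and \ref{caract_eis_discret} on the restricted semiflows $C_{1}$ and $C_{2}$ -- the paper's own proof of sufficiency consists precisely of citing those two propositions. Your observation that this forces an exponential-growth hypothesis (Definition \ref{def_neg}) for $C_{1}$ and an exponential-decay hypothesis (Definition \ref{def_nedc}) for $C_{2}$ is correct and worth making explicit: these are the standing hypotheses of the two cited propositions, which the statement of Proposition \ref{d_trdscr} omits and the paper's proof uses silently, so on this point your write-up is more honest than the original.

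The one concrete misstep is the suggested remedy for the borderline rates. If $\nu_{1}=0$, hypothesis $(d_{1}')$ only says $\left\Vert\Phi(m,n,x)P_{1}(x)v\right\Vert\leq a_{n}\left\Vert P_{1}(x)v\right\Vert$, i.e.\ boundedness of $\Phi_{1}$ along integers, and passing to $C_{-\rho}$ as in $(\ref{relcevshift})$ does not help: the shift argument of Theorem \ref{th_D_discret_neunif} works because there it is the \emph{shifted} cocycle $e^{\rho(m-n)}\Phi(m,n,x)$ that is shown to be bounded, which then yields decay at rate $\rho$ for $\Phi$ itself; here only the unshifted $\Phi_{1}$ is bounded, and after shifting one obtains an estimate that grows like $e^{\rho(m-n)}$, from which no strictly negative exponent can be extracted. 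So your argument (exactly like the paper's, whose cited propositions conclude estimates with a strictly positive $\mu$) genuinely requires $\nu_{1}<0<\nu_{2}$; this is harmless for the equivalence as a whole, since the necessity step in fact produces strict rates, but the values $\nu_{1}=0$ or $\nu_{2}=0$ admitted by the statement are simply not covered by this proof and should be excluded rather than ``fixed'' by a shift.
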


\begin{proof} \emph{Necessity}. If we consider for $C_{1}$
in relation $(\ref{dich_stab})$ of Definition \ref{def_d_ed}
$t=m$, $s=t_{0}=n$ and if we define
\[
a_{n}=N(n), \ n\in\mathbb{N},
\]
relation $(d_{1}')$ is obtained.

Statement $(d_{2})'$ results from Definition \ref{def_d_ed} for
$C_{2}$ if we consider in relation $(\ref{dich_instab})$ $t=m$,
$s=t_{0}=n$ and
\[
a_{m}=N(m), \ m\in\mathbb{N}.
\]
\emph{Sufficiency}. It is obtained by means of Proposition
\ref{caract_es_discret} for $C_{1}$, respectively of Proposition
\ref{caract_eis_discret} for $C_{2}$.

Hence, $C$ is exponentially dichotomic.
\end{proof}

\begin{theorem}
A skew-evolution semiflow $C =(\varphi,\Phi)$ is exponentially
dichotomic if and only if there exist two projectors
$\{P_{k}\}_{k\in \{1,2\}}$, compatible with $C$ such that

$(ed_{1}')$ there exist a constant $\rho_{1}>0$ and a sequence of
real positive numbers $(\alpha_{n})_{n\geq 0}$ such that
\begin{equation}
\sum_{k=n}^{m}e^{\rho_{1}(k-n)}\left\Vert
\Phi(k,n,x)P_{1}(x)v\right\Vert \leq \alpha_{n}\left\Vert
P_{1}(x)v\right\Vert
\end{equation}

$(ed_{2}')$ there exist a constant $\rho_{2}<0$ and a sequence of
real positive numbers $(\beta_{n})_{n\geq 0}$ such that
\begin{equation}
\sum_{k=n}^{m}e^{-\rho_{2}(m-k)}\left\Vert
\Phi(k,n,x)P_{2}(x)v\right\Vert \leq\beta_{m}\left\Vert
\Phi(m,n,x)P_{2}(x)v\right\Vert
\end{equation}
for all $(m,p),(p,n)\in \Delta$ and all $(x,v)\in Y$.
\end{theorem}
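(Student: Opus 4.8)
The plan is to split $C$ into its stable and unstable parts and reduce the statement to the scalar case ($R(t)=t$, equivalently $p=1$) of the characterizations already proved. Because $P_1$ and $P_2$ are compatible with $C$, the invariance relation $P_k(\varphi(t,s,x))\Phi(t,s,x)=\Phi(t,s,x)P_k(x)$ guarantees that each $C_k=(\varphi,\Phi_k)$ with $\Phi_k(t,s,x)=\Phi(t,s,x)P_k(x)$ is again a skew-evolution semiflow on $Y$; this is exactly the reduction already used in the proof of Proposition \ref{d_trdscr}. I would record this at the outset and then work with $C_1$ and $C_2$ in parallel, translating every estimate under the substitution $v\mapsto P_k(x)v$ and using $P_k(x)^2=P_k(x)$ to pass between $\|v\|$-type and $\|P_k(x)v\|$-type right-hand sides.

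For necessity, I would assume $C$ is exponentially dichotomic with projectors $P_1,P_2$. Then $(\ref{dich_stab})$ says precisely that $C_1$ is exponentially stable and $(\ref{dich_instab})$ that $C_2$ is exponentially instable. Applying the necessity half of Corollary \ref{cor_D_discret_neunif} with $p=1$ to $C_1$ produces a constant $\rho_1>0$ and a sequence $(\alpha_n)$ for which $(ed_1')$ holds (after replacing $v$ by $P_1(x)v$); applying the necessity half of Corollary \ref{cor_D_is_discret_neunif} with $p=1$ to $C_2$ produces $\rho_2<0$ and a sequence $(\beta_n)$ for which $(ed_2')$ holds.

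For sufficiency, I would observe that $(ed_1')$ is, after the substitution $v\mapsto P_1(x)v$, exactly the hypothesis of Corollary \ref{cor_D_discret_neunif} (case $p=1$) for the skew-evolution semiflow $C_1$; hence $C_1$ is exponentially stable, i.e. $(\ref{dich_stab})$ holds. Symmetrically, $(ed_2')$ is the hypothesis of Corollary \ref{cor_D_is_discret_neunif} (case $p=1$) for $C_2$, with the sequence $(\beta_m)$ playing the role of $(\alpha_m)$ there, so $C_2$ is exponentially instable and $(\ref{dich_instab})$ holds. Since both estimates hold and the two projectors are compatible with $C$, the definition of exponential dichotomy (Definition \ref{def_d_ed}) is satisfied.

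The only step that will need genuine care — and the place where the hypotheses actually get used — is the bookkeeping that identifies the two one-sided summation inequalities with honest hypotheses of the scalar corollaries: one must verify that $C_1$ and $C_2$ really are skew-evolution semiflows (this is the invariance of the projectors), that idempotency lets one move freely between the two forms of the right-hand side, and, if one prefers to route the argument through Theorem \ref{th_D_discret_neunif} and Proposition \ref{caract_es_discret} instead of the corollaries, that $C_1$ inherits exponential growth from $C$ while $C_2$ inherits exponential decay. None of this is deep, but it is precisely where the argument must be pinned down.
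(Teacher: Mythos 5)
Your proposal is correct and follows essentially the same route as the paper: the paper's own proof simply invokes Theorem \ref{th_D_discret_neunif} and Theorem \ref{th_D_is_discret_neunif} with $R(t)=t$, which is exactly your use of Corollary \ref{cor_D_discret_neunif} and Corollary \ref{cor_D_is_discret_neunif} with $p=1$ applied to $C_{1}$ and $C_{2}$. Your extra bookkeeping (invariance making $C_{k}$ a skew-evolution semiflow, idempotency of $P_{k}(x)$, and the growth/decay hypotheses needed if one routes through the theorems rather than the corollaries) is exactly the detail the paper leaves implicit, so nothing in your argument diverges from theirs.
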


\begin{proof}
It is obtained by means of Theorem \ref{th_D_discret_neunif} and
Theorem \ref{th_D_is_discret_neunif}, by considering $R(t)=t, \
t\geq 0$.
\end{proof}

\section{Nonuniform discrete exponential trichotomy}

\begin{definition}\rm\label{3pr}
Three projectors $\{P_{k}\}_{k\in \{1,2,3\}}$ are said to be
\emph{compatible} with a skew-evolution semiflow
$C=(\varphi,\Phi)$ if

$(t_{1})$ each projector $P_{k}$, $k\in \{1,2,3\}$ is invariant on
$Y$;

$(t_{2})$ for all $x\in  X$, the projections $P_{0}(x)$,
$P_{1}(x)$ and $P_{2}(x)$ verify the relations
\begin{equation*}
P_{1}(x)+P_{2}(x)+P_{3}(x)=I \ \textrm{and} \ P_{i}(x)P_{j}(x)=0,
\ \forall i,j \in \{1,2,3\}, \ i\neq j.
\end{equation*}
\end{definition}

\begin{definition}\rm\label{net}
A skew-evolution semiflow $C=(\varphi,\Phi)$ is called

$(t)$ \emph{trichotomic} if there exist the functions $N_{1}$,
$N_{2}$, $N_{3}:\mathbb{R}_{+}\rightarrow \mathbb{R}_{+}^{\ast }$
and three projectors $P_{1}$, $P_{2}$ and $P_{3}$ compatible with
$C$ such that
\begin{equation}
\left\Vert \Phi(t,t_{0},x)P_{1}(x)v\right\Vert \leq
N_{1}(s)\left\Vert \Phi(s,t_{0},x)P_{1}(x)v\right\Vert
\end{equation}
\begin{equation}
\left\Vert \Phi(s,t_{0},x)P_{2}(x)v\right\Vert \leq
N_{2}(t_{0})\left\Vert \Phi(t,t_{0},x)P_{2}(x)v\right\Vert
\end{equation}
\begin{equation}
\left\Vert \Phi(t,t_{0},x)P_{3}(x)v\right\Vert\leq
N_{3}(s)\left\Vert \Phi(s,t_{0},x)P_{3}(x)v\right\Vert
\end{equation}
\begin{equation}
\left\Vert \Phi(s,t_{0},x)P_{3}(x)v\right\Vert \leq
N_{3}(t)\left\Vert \Phi(t,t_{0},x)P_{3}(x)v\right\Vert
\end{equation}
for all $(t,s),(s,t_{0})\in T$ and all $(x,v)\in Y$;

$(et)$ \emph{exponentially trichotomic} if there exist the
functions $N_{1}$, $N_{2}$, $N_{3}$,
$N_{4}:\mathbb{R}_{+}\rightarrow \mathbb{R}_{+}^{\ast }$,
constants $\nu_{1}$, $\nu_{2}$, $\nu_{3}$, $\nu_{4}$ with the
properties
\[
\nu_{1}\leq\nu_{2}\leq 0\leq\nu_{3}\leq\nu_{4}
\]
and three projectors $P_{1}$, $P_{2}$ and $P_{3}$ compatible with
$C$ such that
\begin{equation}\label{Pes}
\left\Vert \Phi(t,t_{0},x)P_{1}(x)v\right\Vert \leq
N_{1}(s)\left\Vert \Phi(s,t_{0},x)P_{1}(x)v\right\Vert
e^{\nu_{1}(t-s)}
\end{equation}
\begin{equation}\label{Qeis}
N_{4}(t)\left\Vert \Phi(t,t_{0},x)P_{2}(x)v\right\Vert \geq
\left\Vert \Phi(s,t_{0},x)P_{2}(x)v\right\Vert e^{\nu_{4}(t-s)}
\end{equation}
\begin{equation}\label{Redc}
N_{2}(t)\left\Vert \Phi(t,t_{0},x)P_{3}(x)v\right\Vert \geq
\left\Vert \Phi(s,t_{0},x)P_{3}(x)v\right\Vert e^{\nu_{2}(t-s)}
\end{equation}
\begin{equation}\label{Reg}
\left\Vert \Phi(t,t_{0},x)P_{3}(x)v\right\Vert \leq
N_{3}(s)\left\Vert \Phi(s,t_{0},x)P_{3}(x)v\right\Vert
e^{\nu_{3}(t-s)}
\end{equation}
for all $(t,s),(s,t_{0})\in T$ and all $(x,v)\in Y$.
\end{definition}

\begin{remark}\rm
An exponentially trichotomic skew-evolution semiflow is
trichotomic.
\end{remark}

\begin{example}\rm\label{ex_nuet}
Let us consider the example of $C$ given in Example \ref{ex_ce}.
We consider the projections
\begin{equation*}
P_{1}(x)(v)=(v_{1},0,0), \ P_{2}(x)(v)=(0,v_{2},0), \
P_{3}(x)(v)=(0,0,v_{3}).
\end{equation*}
The skew-evolution semiflow $C=(\varphi,\Phi)$ is exponentially
trichotomic with characteristics
\begin{equation*}
\nu_{1}=\nu_{2}=-x(0),  \ \nu_{3}=x(0) \ \textrm{and} \ \nu_{4}=1,
\end{equation*}
\begin{equation*}
N_{1}(u)=e^{ux(0)}, \ \ N_{2}(u)=e^{-2lu}, \ \,
N_{3}(u)=e^{2ux(0)} \ \textrm{and} \ N_{4}(u)=e^{-lu}.
\end{equation*}
\end{example}

As in the case of dichotomy, we denote
\[
C_{k}(t,s,x,v)=(\varphi(t,s,x),\Phi_{k}(t,s,x)v), \ \forall
(t,t_{0},x,v)\in  T\times Y, \ \forall k\in \{1,2,3\},
\]
where
\[
\Phi_{k}(t,t_{0},x)=\Phi(t,t_{0},x)P_{k}(x), \ \forall
(t,t_{0})\in  T, \ \forall x\in  X, \ \forall k\in \{1,2,3\}.
\]

\vspace{3mm}

In discrete time, the trichotomy of a skew-evolution semiflow can
be described as in the next

\begin{proposition} \label{d_trdscr2}
A skew-evolution semiflow $C =(\varphi,\Phi)$ is exponentially
trichotomic if and only if there exist three projectors
$\{P_{k}\}_{k\in \{1,2,3\}}$ compatible with $C$, constants
$\nu_{1}$, $\nu_{2}$, $\nu_{3}$, $\nu_{4}$ with the property
$\nu_{1}\leq \nu_{2}\leq 0\leq \nu_{3}\leq \nu_{4}$ and a sequence
of positive real numbers $(a_{n})_{n\geq 0}$ such that

$(t_{1})$
\begin{equation}\label{rel1_trdscr2}
\left\Vert \Phi(m,n,x)P_{1}(x)v\right\Vert \leq a_{p}\left\Vert
\Phi(p,n,x)P_{1}(x)v\right\Vert e^{\nu_{1}(m-p)}
\end{equation}

$(t_{2})$
\begin{equation}\label{rel2_trdscr2}
\left\Vert \Phi(p,n,x)P_{2}(x)v\right\Vert \leq a_{m}\left\Vert
\Phi(m,n,x)P_{2}(x)v\right\Vert e^{-\nu_{4}(m-p)}
\end{equation}

$(t_{3})$
\begin{equation}\label{rel3_trdscr2}
\left\Vert \Phi(p,n,x)P_{3}(x)v\right\Vert \leq a_{m}\left\Vert
\Phi(m,n,x)P_{3}(x)v\right\Vert e^{-\nu_{2}(m-p)}
\end{equation}

$(t_{4})$
\begin{equation}\label{rel4_trdscr2}
a_{p} \left\Vert \Phi(p,n,x)P_{3}(x)v\right\Vert \geq \left\Vert
\Phi(m,n,x)P_{3}(x)v\right\Vert e^{-\nu_{3}(m-p)}
\end{equation}
for all $(m,p),(p,n)\in \Delta$ and all $(x,v)\in Y$.
\end{proposition}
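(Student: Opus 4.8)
The plan is to follow the scheme of Proposition~\ref{d_trdscr}: treat the three projected skew-evolution semiflows $C_{1}$, $C_{2}$, $C_{3}$ — which are genuine skew-evolution semiflows precisely because each $P_{k}$ is invariant — separately, pairing each of the four inequalities \eqref{Pes}--\eqref{Reg} of Definition~\ref{net} with one of the discrete estimates $(t_{1})$--$(t_{4})$. The \emph{necessity} part is then immediate: setting $t=m$, $s=p$, $t_{0}=n$ with $(m,p),(p,n)\in\Delta$ in \eqref{Pes} gives \eqref{rel1_trdscr2} with $a_{p}\geq N_{1}(p)$; the same specialization in \eqref{Qeis}, \eqref{Redc} and \eqref{Reg}, after moving the factors $N_{4}(m)$, $N_{2}(m)$, $N_{3}(p)$ to the appropriate side, yields \eqref{rel2_trdscr2}, \eqref{rel3_trdscr2} and \eqref{rel4_trdscr2} with $a_{m}\geq N_{4}(m)$, $a_{m}\geq N_{2}(m)$, $a_{p}\geq N_{3}(p)$ respectively. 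Taking $a_{n}:=N_{1}(n)+N_{2}(n)+N_{3}(n)+N_{4}(n)$, a sequence of positive reals, and keeping the constants $\nu_{1}\leq\nu_{2}\leq 0\leq\nu_{3}\leq\nu_{4}$ from Definition~\ref{net}, closes this direction.

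For the \emph{sufficiency} part the key observation is that \eqref{Pes}, \eqref{Qeis}, \eqref{Redc} and \eqref{Reg} are exactly the assertions that $C_{1}$ is exponentially stable, $C_{2}$ is exponentially instable, $C_{3}$ has exponential decay and $C_{3}$ has exponential growth, respectively, in the continuous-time sense of Definitions~\ref{def_ns_nes}, \ref{cis}, \ref{def_nedc} and \ref{def_neg}. I would first extract, directly from $(t_{1})$--$(t_{4})$ and the sign conditions, the auxiliary estimates needed to run the earlier arguments: since $\nu_{1}\leq 0$, $(t_{1})$ forces $\left\Vert\Phi(m,n,x)P_{1}(x)v\right\Vert\leq a_{p}\left\Vert\Phi(p,n,x)P_{1}(x)v\right\Vert$ for integers, i.e.\ $C_{1}$ has exponential growth; similarly $(t_{2})$ gives exponential decay of $C_{2}$, and $(t_{3})$, $(t_{4})$ give exponential decay and exponential growth of $C_{3}$ (using $\nu_{2}\leq 0\leq\nu_{3}$). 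Then $(t_{1})$ with $p=n$ provides the discrete hypothesis of Proposition~\ref{caract_es_discret} for $C_{1}$, so $C_{1}$ is exponentially stable, which is \eqref{Pes}; $(t_{2})$ with $p=n$ provides the discrete hypothesis of Proposition~\ref{caract_eis_discret} for $C_{2}$, giving \eqref{Qeis}; and $(t_{3})$, $(t_{4})$ with $p=n$, combined with the growth/decay of $C_{3}$ just obtained, yield \eqref{Redc} and \eqref{Reg} by repeating, for $\Phi(\cdot,\cdot,x)P_{3}(x)$, the discrete-to-continuous passage of the sufficiency proofs of Propositions~\ref{caract_eis_discret} and \ref{caract_es_discret}. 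Assembling the four continuous inequalities together with the projectors $\{P_{k}\}_{k\in\{1,2,3\}}$ shows that $C$ is exponentially trichotomic.

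The step I expect to be the main obstacle is the $P_{3}$-component. Unlike $P_{1}$ (purely contractive) and $P_{2}$ (purely expansive), $\Phi(\cdot,\cdot,x)P_{3}(x)$ must be squeezed on both sides — bounded below through $(t_{3})$, which produces the ``decay'' inequality \eqref{Redc}, and above through $(t_{4})$, which produces the ``growth'' inequality \eqref{Reg} — so one has to verify that the same projectors and the same sequence $(a_{n})$ serve both bounds simultaneously. A related, purely technical nuisance is the passage from the integer estimates to the continuous-time ones: bridging the non-integer parts $t-[t]$ near $t$ and $s-[s]$ near $s$ requires exactly the self-generated exponential growth and decay bounds noted above, so the order of the argument matters. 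One should also keep an eye on the degenerate cases $\nu_{1}=0$ or $\nu_{4}=0$, where ``exponential'' stability/instability collapses to plain stability/instability and the corresponding inequality of Definition~\ref{net} still holds, which is why only $\nu_{1}\leq\nu_{2}\leq 0\leq\nu_{3}\leq\nu_{4}$, and not strict inequalities, is assumed.
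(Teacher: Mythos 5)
Your overall scheme is the paper's: necessity by specializing \eqref{Pes}--\eqref{Reg} to integer times $t=m$, $s=p$, $t_{0}=n$ (your version, taking $a_{n}=N_{1}(n)+N_{2}(n)+N_{3}(n)+N_{4}(n)$, is fine and in fact cleaner than the paper's citation of Definitions \ref{def_ns_nes}, \ref{cis}, \ref{def_neg}, \ref{def_nedc}), and sufficiency by rerunning the integer-to-real interpolation of Propositions \ref{caract_es_discret} and \ref{caract_eis_discret} for the components $C_{1}$, $C_{2}$, $C_{3}$.

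The genuine gap is in your claim that the bridging estimates can be ``self-generated'': you assert that $(t_{1})$--$(t_{4})$ together with the sign conditions already give the exponential growth of $C_{1}$, the exponential decay of $C_{2}$, and both for $C_{3}$. What $(t_{1})$--$(t_{4})$ give is only bounds at integer pairs $(m,p)\in\Delta$, whereas exponential growth and decay in the sense of Definitions \ref{def_neg} and \ref{def_nedc} are quantified over all real $t\geq s\geq t_{0}$, and it is precisely the values of $\Phi(t,t_{0},x)$ at non-integer times that the interpolation step (bridging $t-[t]$ and $[t_{0}]+1-t_{0}$) must control. A discrete hypothesis cannot produce such control: for instance a scalar-type cocycle $\Phi(t,s,x)=u(t)/u(s)$ with $u\equiv 1$ on the integers but with spikes $u(n+\tfrac{1}{2})=e^{e^{n}}$ satisfies the cocycle identity and any integer-time estimate of the form $(t_{1})$--$(t_{4})$ with $a_{n}\equiv 1$, yet no nonuniform function $N_{1}$ and no $\nu_{1}\leq 0$ can make \eqref{Pes} hold for it. So the continuous-time growth/decay of the projected semiflows must be \emph{assumed}, not derived: this is exactly what the paper does tacitly (its sufficiency invokes the functions $M$, $\omega$ of Definitions \ref{def_neg} and \ref{def_nedc}, i.e.\ the standing hypotheses of Propositions \ref{caract_es_discret} and \ref{caract_eis_discret}), and what Theorem \ref{trich_discr_n} states explicitly (``$C_{1}$ has exponential growth, $C_{2}$ has exponential decay''). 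Your necessity direction and the $p=n$ reductions are otherwise correct; to repair the sufficiency you should add the growth/decay hypotheses for the relevant components (or note, as the statement's phrasing suggests, that they are implicitly assumed), rather than try to extract them from the discrete inequalities.
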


\begin{proof} \emph{Necessity}. $(t_{1})$ is obtain if we consider
for $C_{1}$ in relation $(\ref{rel_exp_stab})$ of Definition
\ref{def_ns_nes} $t=n$, $s=t_{0}=m$ and if we define
\[
a_{p}=N(p), \ p\in\mathbb{N} \ \textrm{and} \ \nu_{1}=-\nu<0.
\]
$(t_{2})$ follows according to Definition \ref{cis} for $C_{2}$ if
we consider in relation $(\ref{rel_exp_instab})$ $t=m$, $s=n$,
$t_{0}=n_{0}$ and
\[
a_{m}=N(m), \ m\in\mathbb{N} \ \textrm{and} \ \nu_{4}=\nu>0.
\]
$(t_{3})$ is obtained for $C_{3}$ out of relatiom
$(\ref{rel_neg})$ of Definition \ref{def_neg} for $t=m$, $s=p$,
$t_{0}=n$ and if we define
\[
a_{m}=M(m)  \ \textrm{and} \ \nu_{2}=-\omega(m)<0, \
m\in\mathbb{N}.
\]
$(t_{4})$ follows for $C_{3}$ from relation $(\ref{rel_nedc})$ of
Definition \ref{def_nedc} for $t=m$, $s=p$, $t_{0}=n$ and if we
consider
\[
a_{p}=M(p) \ \textrm{and} \ \nu_{3}=\omega(p)>0, \ p\in\mathbb{N}.
\]
\emph{Sufficiency}. Let $t\geq t_{0}+1$. We denote $n=[t]$,
$n_{0}=[t_{0}]$ and we obtained the relations
\[
n\leq t<n+1, \ n_{0}\leq t_{0}<n_{0}+1, \ n_{0}+1\leq n.
\]
According to $(t_{1})$, we have
\[
\left\Vert\Phi_{1}(t,t_{0},x)v\right\Vert\leq
\]
\[
\leq
M(n)e^{\omega(n)(t-n)}\left\Vert\Phi(n,n_{0}+1,\varphi(n_{0}+1,t_{0},x))
\Phi(n_{0}+1,t_{0},x)P_{1}(x)v\right\Vert\leq
\]
\[
\leq a_{n}M^{2}(n)e^{2[\omega(n)+\mu]}e^{-\mu(t-t_{0})}\left\Vert
P_{1}(x)v\right\Vert,
\]
for all $(x,v)\in Y$, where functions $M$ and $\omega$ are given
as in Definition \ref{def_neg}.

For $t\in[t_{0},t_{0}+1)$ we have
\[
\left\Vert\Phi_{1}(t,t_{0},x)v\right\Vert\leq
M(t_{0})e^{\omega(t_{0})(t-t_{0})}\left\Vert
P_{1}(x)v\right\Vert\leq
\]
\[
\leq M(t_{0})e^{\omega(t_{0})+\mu} e^{-\mu(t-t_{0})}\left\Vert
P_{1}(x)v\right\Vert,
\]
for all $(x,v)\in Y$. Hence, relation $(\ref{Pes})$ is obtained.

Let $t\geq t_{0}+1$ and $n=[t]$ respectively $n_{0}=[t_{0}]$. It
follows
\[
n\leq t<n+1, \ n_{0}\leq t_{0}<n_{0}+1, \ n_{0}+1\leq n.
\]
From $(t_{2})$, it is obtained
\[
\left\Vert\Phi_{2}(t,t_{0},x)v\right\Vert=
\]
\[
=\left\Vert\Phi(t,n,\varphi(n,n_{0}+1,x))\Phi(n,n_{0}+1,
\varphi(n_{0}+1,t_{0},x))\Phi(n_{0}+1,t_{0},x)P_{2}(x)v\right\Vert\geq
\]
\[
\geq
[M(t)]^{-1}e^{-\omega(t-n)}[M(t)]^{-1}e^{-\omega(n_{0}+1-t_{0})}
\left\Vert\Phi_{2}(n,n_{0}+1,x)v\right\Vert\geq
\]
\[
\geq
\frac{[M(t)]^{-2}}{\widetilde{N}}e^{-2\omega}e^{\widetilde{\nu}(n-n_{0}+1)}\left\Vert
P_{2}(x)v\right\Vert\geq
\frac{e^{\widetilde{\nu}}}{[M(t)]^{2}\widetilde{N}e^{2\omega}}e^{\widetilde{\nu}(t-t_{0})}\left\Vert
P_{2}(x)v\right\Vert,
\]
for all $(x,v)\in Y$, where $M$ and $\omega$ are given by
Definition \ref{def_nedc}.

\noindent For $t\in[t_{0},t_{0}+1)$ we have
\[
M\left\Vert\Phi_{2}(t,t_{0},x)v\right\Vert\geq
e^{-\omega(t-t_{0})}\left\Vert P_{2}(x)v\right\Vert\geq
e^{-(\widetilde{\nu}+\omega)}e^{\widetilde{\nu}(t-t_{0})}\left\Vert
P_{2}(x)v\right\Vert,
\]
for all $(x,v)\in Y$. It follows that
\[
N\left\Vert\Phi_{2}(t,t_{0},x)v\right\Vert\geq
e^{\nu(t-t_{0})}\left\Vert P_{2}(x)v\right\Vert,
\]
for all $(t,t_{0},x,v)\in  T\times Y$, where we have denoted
\[
N=Me^{(\widetilde{\nu}+\omega)}+M^{2}\widetilde{N}e^{(-\widetilde{\nu}+2\omega)}
\ \textrm{and} \ \nu=\widetilde{\nu}
\]
and which implies relation $(\ref{Qeis})$.

By a similar reasoning, from $(t_{3})$ relation $(\ref{Redc})$ is
obtained, and from $(t_{4})$ relation $(\ref{Reg})$ follows.

Hence, the skew-evolution semiflow $C$ is exponentially
trichotomic.
\end{proof}

\vspace{3mm}

Some characterizations in discrete time for the exponential
trichotomy for skew-evolution semiflows are given in what follows.

\begin{theorem}\label{trich_discr_n}
A skew-evolution semiflow $C =(\varphi,\Phi)$ is exponentially
trichotomic if and only if there exist three projectors
$\{P_{k}\}_{k\in \{1,2,3\}}$ compatible cu $C$ such that $C_{1}$
has exponential growth, $C_{2}$ has exponential decay and such
that following relations hold

$(t_{1}')$ there exist a constant $\rho_{1}>0$ and a sequence of
positive real numbers $(\alpha_{n})_{n\geq 0}$ such that
\begin{equation}
\sum_{k=n}^{m}e^{\rho_{1}(k-n)}\left\Vert
\Phi(k,n,x)P_{1}(x)v\right\Vert \leq \alpha_{n}\left\Vert
P_{1}(x)v\right\Vert
\end{equation}

$(t_{2}')$ there exist a constant $\rho_{2}>0$ and a sequence of
positive real numbers $(\beta_{n})_{n\geq 0}$ such that
\begin{equation}
\sum_{k=n}^{m}e^{-\rho_{2}(k-n)}\left\Vert
\Phi(k,n,x)P_{2}(x)v\right\Vert
\leq\beta_{m}e^{-\rho_{2}(m-n)}\left\Vert
\Phi(m,n,x)P_{2}(x)v\right\Vert
\end{equation}

$(t_{3}')$ there exist a constant $\rho_{3}>0$ and a sequence of
positive real numbers $(\gamma_{n})_{n\geq 0}$ such that
\begin{equation}
\sum_{k=p}^{m}e^{-\rho_{3}(k-n)}\left\Vert
\Phi(k,n,x)P_{3}(x)v\right\Vert \leq
\gamma_{n}e^{-\rho_{3}(p-n)}\left\Vert\Phi(p,n,x)
P_{3}(x)v\right\Vert
\end{equation}

$(t_{4}')$ there exist a constant $\rho_{4}>0$ and a sequence of
positive real numbers $(\delta_{n})_{n\geq 0}$ such that
\begin{equation}
\sum_{k=p}^{m}e^{\rho_{4}(k-n)}\left\Vert
\Phi(k,n,x)P_{3}(x)v\right\Vert \leq
\delta_{m}e^{\rho_{4}(m-n)}\left\Vert \Phi(m,n,x)
P_{3}(x)v\right\Vert
\end{equation}
for all $(m,p),(p,n)\in \Delta$ and all $(x,v)\in Y$.
\end{theorem}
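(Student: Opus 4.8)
The plan is to split the four estimates according to the three invariant blocks $C_{1},C_{2},C_{3}$ of $C$ and to invoke the Datko-type results already proved, exactly as in the proof of the dichotomy theorem but with one extra block. Throughout I would use the convention, already employed for Propositions \ref{d_trdscr} and \ref{d_trdscr2}, that applying an earlier result ``to $C_{k}$'' means running its proof with $\Phi$ replaced by $\Phi_{k}$: since $P_{k}$ is invariant, $C_{k}$ satisfies the cocycle identity, and the only change is that the value $\Phi(n,n,x)v=v$ becomes $\Phi_{k}(n,n,x)v=P_{k}(x)v$, which is exactly why $P_{k}(x)v$ appears on the right in $(t_{1}')$--$(t_{4}')$. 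I would also record first that $(\ref{Pes})$ is exponential stability of $C_{1}$, $(\ref{Qeis})$ exponential instability of $C_{2}$, $(\ref{Reg})$ exponential growth of $C_{3}$ and $(\ref{Redc})$ exponential decay of $C_{3}$; moreover exponential stability forces exponential growth and exponential instability forces exponential decay, so the hypotheses that $C_{1}$ has exponential growth and $C_{2}$ exponential decay are automatically met in the necessity part.

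For the $P_{1}$-block I would apply Theorem \ref{th_D_discret_neunif} with $R(t)=t$ to $C_{1}$ (legitimate since $C_{1}$ has exponential growth): its equivalence reads precisely as $(\ref{Pes})\Leftrightarrow(t_{1}')$. For the $P_{2}$-block I would apply Theorem \ref{th_D_is_discret_neunif} with $R(t)=t$ to $C_{2}$, obtaining $\sum_{k=n}^{m}e^{-\rho(m-k)}\|\Phi_{2}(k,n,x)v\|\le\alpha_{m}\|\Phi_{2}(m,n,x)v\|$ with $\rho<0$; setting $\rho_{2}=-\rho>0$ and factoring $e^{-\rho_{2}(k-n)}=e^{-\rho_{2}(m-n)}e^{\rho_{2}(m-k)}$ turns this into $(t_{2}')$ with $\beta_{m}=\alpha_{m}$. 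Both steps are routine.

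The substantive part is the $P_{3}$-block, where two one-sided estimates must be extracted from the single component $C_{3}$. Here I would re-base the cocycle $C_{3}$ at the intermediate time $p$: by invariance of $P_{3}$, $\Phi_{3}(k,n,x)v=\Phi_{3}\bigl(k,p,\varphi(p,n,x)\bigr)\bigl(\Phi_{3}(p,n,x)v\bigr)$ with $\Phi_{3}(p,n,x)v$ in the range of $P_{3}(\varphi(p,n,x))$, so applying Theorem \ref{th_D_discret_neunif} (resp.\ Theorem \ref{th_D_is_discret_neunif}) with $R(t)=t$ to this re-based cocycle yields $(t_{3}')$ (resp.\ $(t_{4}')$) after regrouping $e^{-\rho_{3}(k-n)}=e^{-\rho_{3}(p-n)}e^{-\rho_{3}(k-p)}$ and $e^{\rho_{4}(k-n)}=e^{\rho_{4}(m-n)}e^{-\rho_{4}(m-k)}$; in the necessity direction the relevant bounds reduce to the geometric series $\sum_{k\ge p}e^{(\nu_{3}-\rho_{3})(k-p)}$ and $\sum_{k\le m}e^{-(\rho_{4}+\nu_{2})(m-k)}$, which converge as soon as $\rho_{3}>\nu_{3}\ge0$ and $\rho_{4}>-\nu_{2}\ge0$. (Re-basing at $p$ makes the sequence in $(t_{3}')$ naturally indexed by $p$, so the stated $\gamma_{n}$ should be read as $\gamma_{p}$; the instability-type estimate $(t_{4}')$ retains the late endpoint $m$, matching the stated $\delta_{m}$.)

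I expect the main obstacle to be the sufficiency of the $P_{3}$-block. From $(t_{3}')$, keeping only the term $k=m$, one reads off a discrete exponential-growth bound for $C_{3}$, and from $(t_{4}')$ with $k=p$ a discrete exponential-decay bound; but passing from these discrete bounds to the continuous-time relations $(\ref{Reg})$ and $(\ref{Redc})$ requires unit-interval control of $\Phi$ on the range of $P_{3}$ --- i.e.\ that $C_{3}$ itself has exponential growth and exponential decay --- which is not among the listed hypotheses (unlike the $C_{1}$ and $C_{2}$ assumptions, which the proofs of Proposition \ref{caract_es_discret} and Proposition \ref{caract_eis_discret} genuinely use for this step). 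I would handle this either by adjoining ``$C_{3}$ has exponential growth and exponential decay'' to the hypotheses, or by deriving the needed unit-interval bounds from $(t_{3}')$ and $(t_{4}')$ taken with $m=p+1$. Modulo this point, assembling the three blocks yields relations $(\ref{Pes})$--$(\ref{Reg})$ together with three projectors compatible with $C$ and, after renaming the rates, $\nu_{1}\le\nu_{2}\le0\le\nu_{3}\le\nu_{4}$, that is, exponential trichotomy of $C$.
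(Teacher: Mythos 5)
Your proposal is correct in substance and follows the same basic plan as the paper (blockwise reduction over $P_{1},P_{2},P_{3}$, with the convention of applying earlier results ``to $C_{k}$''), but it is implemented differently. The paper proves only the equivalence $(t_{1})\Leftrightarrow(t_{1}')$ in detail and dismisses the other blocks with ``similarly'': its necessity is the same geometric-series computation you describe (via Proposition \ref{d_trdscr2}), but its sufficiency does not invoke Theorem \ref{th_D_discret_neunif}; instead it sums the $m-n+1$ exponential-growth inequalities $\left\Vert\Phi(m,n,x)P_{1}(x)v\right\Vert\leq Me^{\omega(m-k)}\left\Vert\Phi(k,n,x)P_{1}(x)v\right\Vert$ against $(t_{1}')$ to get a pointwise bound with rate $\omega-\rho_{1}$ (imposing $\omega<\rho_{1}$), and then concludes through Proposition \ref{d_trdscr2}. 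Your substitution of the Datko-type Theorems \ref{th_D_discret_neunif} and \ref{th_D_is_discret_neunif} for $C_{1}$ and $C_{2}$ is legitimate under the stated hypotheses (and, as you note, those hypotheses are automatic in the necessity direction), and your explicit treatment of the $P_{3}$ block by re-basing at $p$ supplies exactly what the paper's ``similarly'' omits; your observation that the constant in $(t_{3}')$ is naturally indexed by $p$ rather than $n$ points at a genuine defect of the statement that the paper never confronts.

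Two remarks on the one place where your plan wobbles. Your fallback of extracting unit-interval control of $C_{3}$ from $(t_{3}')$, $(t_{4}')$ with $m=p+1$ cannot work: those inequalities involve only integer times and give no information about $\Phi(t,\cdot,\cdot)$ for non-integer $t$, so no continuous-time growth or decay bound for the $P_{3}$ block can come out of them. The resolution consistent with the paper is not to aim at (\ref{Redc}) and (\ref{Reg}) directly, but to keep a single term of each sum ($k=m$ in $(t_{3}')$, $k=p$ in $(t_{4}')$), which yields the pointwise discrete conditions $(t_{3})$, $(t_{4})$ of Proposition \ref{d_trdscr2} (again with the index read as $\gamma_{p}$), and then to invoke that proposition; the discrete-to-continuous passage, together with the unstated growth/decay assumptions it needs on the $P_{3}$ block, is buried there rather than in this theorem. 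So the gap you flag is real, but it is a gap of the theorem's hypotheses and of Proposition \ref{d_trdscr2} itself, not an additional obstacle specific to your argument; your first remedy (adjoining exponential growth and decay of $C_{3}$, which is what the paper implicitly uses) is the correct one.
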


\begin{proof}
$(t_{1})\Leftrightarrow (t_{1}')$ \emph{Necessity}. As $C$ is
exponentially trichotomic, Proposition \ref{d_trdscr2} assures the
existence of a projector $P$, of a constant $\nu_{1}\leq 0 $ and
of a sequence of positive real numbers $(a_{n})_{n\geq 0}$ such
$(t_{1})$ holds. Let $P_{1}=P$. If we consider
\[
\rho_{1}=-\frac{\nu_{1}}{2}>0,
\]
we obtain
\begin{equation*}
\sum_{k=n}^{m}e^{\rho_{1}(k-n)}\left\Vert\Phi(k,n,x)P_{1}(x)v\right\Vert\leq
\end{equation*}
\begin{equation*}
\leq a_{n}
\sum_{k=n}^{m}e^{\rho_{1}(k-n)}e^{\nu_{1}(k-n)}\left\Vert
\Phi(n,n,x)P_{1}(x)v\right\Vert =
\end{equation*}
\begin{equation*}
=a_{n}\left\Vert P_{1}(x)v\right\Vert
\sum_{k=n}^{m}e^{(\rho_{1}+\nu_{1})(k-n)}\leq \alpha_{n}\left\Vert
P_{1}(x)v\right\Vert, \forall m,n \in \mathbb{N}, \forall (x,v)\in
Y,
\end{equation*}
where we have denoted
\[
\alpha_{n}=a_{n}e^{-\frac{\nu_{1}}{2}}, \ n\in \mathbb{N}.
\]
\emph{Sufficiency}. As $C_{1}$ has exponential growth, there exist
constants $M\geq 1$ and $r>1$ such that relation
\begin{equation}
\left\Vert \Phi(n+p,n,x)v\right\Vert \leq Mr^{p}\left\Vert
v\right\Vert,
\end{equation}
holds for all $n,p\in  \mathbb{N}$ and all $(x,v)\in Y$. If we
denote $\omega=\ln r>0$ the inequality can be written as follows
\begin{equation*}
\left\Vert \Phi(m,n,x)\right\Vert \leq Me^{\omega(m-k)}\left\Vert
\Phi(k,n,x)\right\Vert, \ \forall (m, k), (k,n)\in \Delta, \ x\in
 X.
\end{equation*}
We consider successively the $m-n+1$ relations. By denoting
$P=P_{1}$, we obtain
\begin{equation*}
\left\Vert \Phi(m,n,x)P(x)v\right\Vert
\leq\frac{Me^{\omega(m-n)}}{1+e^{\rho_{1}}+...+e^{\rho_{1}(m-n)}}\alpha_{n}\left\Vert
P(x)v\right\Vert,
\end{equation*}
for all $(m, n)\in \Delta$ and all $(x,v)\in Y$. If we define the
constant
\[
\nu_{1}=\omega -\rho_{1} \ \textrm{pentru} \ \omega <\rho_{1}
\]
and the sequence of nonnegative real numbers
\[
a_{n}=M\alpha_{n}, \ n\in\mathbb{N},
\]
relation $(t_{1})$ is obtained.

Similarly can also be proved the other equivalences.
\end{proof}

\vspace{3mm}

In order to characterize the exponential trichotomy by means of
four projectors, we give the next

\begin{definition}\rm
Four invariant projectors $\{R_{k}\}_{k\in \{1,2,3,4\}}$ that
satisfies for all $(x,v)\in Y$ following relations:

$(pc_{1}')$ $%
R_{1}(x)+R_{3}(x)=R_{2}(x)+R_{4}(x)=I$;

$(pc_{2}')$ $%
R_{1}(x)R_{2}(x)=R_{2}(x)R_{1}(x)=0$ \textrm{\c{s}i} $%
R_{3}(x)R_{4}(x)=R_{4}(x)R_{3}(x)$;

$(pc_{3}')$ $\left\Vert \left[
R_{1}(x)+R_{2}(x)\right]v\right\Vert ^{2}=\left\Vert
R_{1}(x)v\right\Vert ^{2}+\left\Vert R_{2}(x)v\right\Vert ^{2}$;

$(pc_{4}')$ $\left\Vert \left[ R_{1}(x)+R_{3}(x)R_{4}(x)\right]
v\right\Vert ^{2}=\left\Vert R_{1}(x)v\right\Vert ^{2}+\left\Vert
R_{3}(x)R_{4}(x)v\right\Vert ^{2}$;

$(pc_{5}')$ $\left\Vert \left[R_{2}(x)+R_{3}(x)R_{4}(x)\right]
v\right\Vert ^{2}=\left\Vert R_{2}(x)v\right\Vert ^{2}+\left\Vert
R_{3}(x)R_{4}(x)v\right\Vert ^{2}$,

\noindent are called \emph{compatible} with the skew-evolution
semiflow $C$.
\end{definition}

\begin{theorem}\label{th_discret_4pr}
A skew-evolution semiflow $C=(\varphi,\Phi)$ is exponentially
trichotomic if and only if there exist four projectors
$\{R_{k}\}_{k\in \{1,2,3,4\}}$ compatible with $C$, constants $\mu
> \nu
>0$ and a sequence of positive real numbers $(\alpha_{n})_{n\geq 0}$
such that

$(t_{1}'')$
\begin{equation}\label{rel1_th_d_4pr}
\left\Vert \Phi(m+p,m,x)R_{1}(x)v\right\Vert \leq
\alpha_{m}\left\Vert R_{1}(x)v\right\Vert e^{-\nu p}
\end{equation}

$(t_{2}'')$
\begin{equation}\label{rel2_th_d_4pr}
\left\Vert R_{2}(x)v\right\Vert \leq \alpha_{p}\left\Vert
\Phi(m+p,m,x)R_{2}(x)v\right\Vert e^{-\mu p}
\end{equation}

$(t_{3}'')$
\begin{equation}\label{rel3_th_d_4pr}
\left\Vert R_{3}(x)v\right\Vert \leq \alpha_{p}\left\Vert
\Phi(m+p,m,x)R_{3}(x)v\right\Vert e^{\nu
 p}
\end{equation}

$(t_{4}'')$
\begin{equation}\label{rel4_th_d_4pr}
\left\Vert \Phi(m+p,m,x)R_{4}(x)v\right\Vert \leq
\alpha_{m}\left\Vert R_{4}(x)v\right\Vert e^{\mu p}
\end{equation}
for all $m,p\in \mathbb{N}$ and all $(x,v)\in Y$.
\end{theorem}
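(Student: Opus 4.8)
The plan is to reduce this four‑projector characterization to the three‑projector discrete description of exponential trichotomy already obtained in Proposition \ref{d_trdscr2}, together with Propositions \ref{caract_es_discret} and \ref{caract_eis_discret}. The dictionary between the two descriptions is
\[
R_{1}=P_{1},\quad R_{2}=P_{2},\quad R_{3}=P_{2}+P_{3},\quad R_{4}=P_{1}+P_{3},
\]
for which a direct computation using $P_{i}P_{j}=0$ $(i\neq j)$ and $P_{i}^{2}=P_{i}$ gives $R_{3}R_{4}=R_{4}R_{3}=P_{3}$, $R_{1}+R_{3}=R_{2}+R_{4}=I$ and $R_{1}R_{2}=R_{2}R_{1}=0$; conversely, from four compatible projectors one sets $P_{1}=R_{1}$, $P_{2}=R_{2}$, $P_{3}=R_{3}R_{4}$ and, using $R_{3}=I-R_{1}$, $R_{4}=I-R_{2}$, checks that $P_{1}+P_{2}+P_{3}=I$ and $P_{i}P_{j}=0$, so that $\{P_{k}\}_{k\in\{1,2,3\}}$ is a triple compatible with $C$ in the sense of Definition \ref{3pr}. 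The relations $(pc_{3}')$--$(pc_{5}')$ translate into the Pythagorean identities $\|(P_{i}+P_{j})v\|^{2}=\|P_{i}v\|^{2}+\|P_{j}v\|^{2}$ among the three summands; being postulated at every $x\in X$, they may be applied at $\varphi(m+p,m,x)$ as well, and this is what lets one split a norm estimate on an $R_{3}$‑ or $R_{4}$‑subspace into estimates on the individual $P_{k}$‑subspaces and recombine them.

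For necessity I would start from an exponentially trichotomic $C$, invoke Proposition \ref{d_trdscr2} to produce $P_{1},P_{2},P_{3}$, constants $\nu_{1}\leq\nu_{2}\leq 0\leq\nu_{3}\leq\nu_{4}$ and a sequence $(a_{n})$, and define the $R_{k}$ as above, taking $\nu=-\nu_{1}$ and $\mu=\nu_{4}$ (positive for a nondegenerate trichotomy, whence $\mu>\nu>0$). Evaluating relation $(t_{1})$ of Proposition \ref{d_trdscr2} at $p=n$ gives $(t_{1}'')$; relation $(t_{2})$ at $p=n$ gives $(t_{2}'')$; and the lower bound $(t_{3})$ and the upper bound $(t_{4})$, after combining the centre part $\mathrm{Range}(P_{3})$ with $\mathrm{Range}(P_{2})$ via $(pc_{5}')$ (respectively with $\mathrm{Range}(P_{1})$ via $(pc_{4}')$) and using $e^{-\mu p}\leq e^{\nu p}$ for $p\geq 0$, give $(t_{3}'')$ and $(t_{4}'')$. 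One lets $\alpha_{n}$ be the largest of the finitely many constants so produced (with the appropriate reindexing of the nonuniform sequence).

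For sufficiency the key point is that $(t_{1}'')$--$(t_{4}'')$ are assumed for all $m,p\in\mathbb{N}$, not merely from a fixed initial moment, so no extra hypothesis of exponential growth or decay is required. Given $\{R_{k}\}$, set $P_{1}=R_{1}$, $P_{2}=R_{2}$, $P_{3}=R_{3}R_{4}$; using the invariance of the projectors and the cocycle identity $(c_{2})$ one writes, for $(m,p),(p,n)\in\Delta$,
\[
\Phi(m,n,x)P_{k}(x)v=\Phi(m,p,\varphi(p,n,x))\,P_{k}(\varphi(p,n,x))\,\Phi(p,n,x)v,
\]
and substitutes this into $(t_{1}'')$--$(t_{4}'')$ (restricting $(t_{3}''),(t_{4}'')$ to $\mathrm{Range}(R_{3}R_{4})=\mathrm{Range}(P_{3})$, where $R_{3}$ and $R_{4}$ act as the identity) to recover precisely relations $(t_{1})$--$(t_{4})$ of Proposition \ref{d_trdscr2}, which then yields the exponential trichotomy of $C$. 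Equivalently, $(t_{1}'')$ is the hypothesis of Proposition \ref{caract_es_discret} for $C_{1}$, $(t_{2}'')$ that of Proposition \ref{caract_eis_discret} for $C_{2}$, and $(t_{3}''),(t_{4}'')$ are the two‑sided exponential bounds for $C_{3}$.

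The step I expect to be the genuine obstacle is the verification of $(pc_{3}')$--$(pc_{5}')$ in the necessity direction: an arbitrary trichotomic splitting of the fibre $V$ need not be orthogonal in this Pythagorean sense, so one must either restrict to a setting where the projections $P_{k}(x)$ can be chosen orthogonal (for instance a Hilbert fibre) or regard these identities as built into the notion of compatibility from the start. All the remaining work (the projector algebra, the bookkeeping of the finitely many constants, and the passage through the cocycle identity) is routine and parallels the proofs of Proposition \ref{d_trdscr2} and Theorem \ref{trich_discr_n}.
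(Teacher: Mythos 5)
Your proposal is correct and follows essentially the same route as the paper: the same dictionary $R_{1}=P_{1}$, $R_{2}=P_{2}$, $R_{3}=I-P_{1}=P_{2}+P_{3}$, $R_{4}=I-P_{2}=P_{1}+P_{3}$ (so that $R_{3}R_{4}=R_{4}R_{3}=P_{3}$), with both implications reduced to Proposition \ref{d_trdscr2}. The only obstacle you flag --- verifying the Pythagorean conditions $(pc_{3}')$--$(pc_{5}')$ for these $R_{k}$ in the necessity direction --- is precisely the step the paper passes over by simply asserting that $R_{1},\dots,R_{4}$ are compatible with $C$, so your account is, if anything, the more careful one.
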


\begin{proof}\emph{Necessity}.  As $C$ is exponentially
trichotomic, according to Proposition \ref{d_trdscr2} there exist
three projectors $\{P_{k}\}_{k\in \{1,2,3\}}$ compatible with $C$,
constants $\nu_{1}\leq \nu_{2}\leq 0\leq \nu_{3}\leq \nu_{4}$ and
a sequence of positive real numbers $(a_{n})_{n\geq 0}$ such that
relations $(\ref{rel1_trdscr2})$--$(\ref{rel4_trdscr2})$ hold.

We will define the projectors
\[
R_{1}=P_{1}, \ R_{2}=P_{2}, \ R_{3}=I-P_{1} \ \textrm{and} \
R_{4}=I-P_{2},
\]
such that
\[
R_{3}R_{4}=R_{4}R_{3}=P_{3}.
\]
Projectors $R_{1}$, $R_{2}$, $R_{3}$ \c{s}i $R_{4}$ are compatible
with $C$. Let us define
\[
\mu=\nu_{3}=\nu_{4}>0, \ \nu=-\nu_{1}=-\nu_{2}>0 \ \textrm{and} \
\alpha_{n}=a_{n}, \ n\in\mathbb{N}.
\]
Hence, relations $(\ref{rel1_th_d_4pr})$--$(\ref{rel4_th_d_4pr})$
hold.

\emph{Sufficiency}. We consider the projectors
\[
P_{1}=R_{1}, \ P_{2}=R_{2}, \ P_{3}=R_{3}R_{4}.
\]
These are compatible with $C$.

The statements of Proposition \ref{d_trdscr2} follow if we
consider
\[
\nu_{1}=\nu_{2}=-\nu<0, \ \nu_{3}=\nu_{4}=\mu>0 \  \textrm{and} \
a_{n}=\alpha_{n}, \ n\in\mathbb{N}.
\]
Hence, $C$ is exponentially trichotomic, which ends the proof.
\end{proof}

{\footnotesize

\vspace{5mm}

\noindent\begin{tabular}[t]{ll}

Codru\c{t}a Stoica \\
Institut de Math\' ematiques  \\
Universit\' e Bordeaux 1, France  \\
e-mail: \texttt{codruta.stoica@math.u-bordeaux1.fr}\\

\vspace{3mm}\\

Mihail Megan \\
Faculty of Mathematics and Computer Science \\
West University of Timi\c{s}oara \\
email: \texttt{megan@math.uvt.ro}
\end{tabular}}

\end{document}